\numberwithin{equation}{section}
\newtheorem{theorem}{Theorem}
\newtheorem{lemma}{Lemma}
\newtheorem{corollary}{Corollary}
\newtheorem{remark}{Remark}
\newtheorem{definition}{Definition}
\numberwithin{theorem}{section}
\numberwithin{corollary}{section}
\numberwithin{lemma}{section}
\numberwithin{definition}{section}
\numberwithin{proposition}{section}
\numberwithin{remark}{section}
\newcommand{\Io}{0}
\newcommand{\I}{n}
\newcommand{\R}{\mathbb R}
\newcommand{\N}{\mathbb N}
\newcommand{\medint}{-\kern  -,375cm\int}
\newcommand{\dint}{\displaystyle\int}
\date{}
\title[]{Shape of extremal functions\\for weighted Sobolev-type inequalities}
\begin{document}
\author[F. Brock]{F. Brock$^2$}
\author[F. Chiacchio]{F. Chiacchio$^1$}
\author[G. Croce]{G. Croce $^3$}
\author[A. Mercaldo]{A. Mercaldo$^1$}

\normalsize \small
\renewcommand{\baselinestretch}{1.1}
\normalsize

\setcounter{footnote}{1}
\footnotetext{Universit\`a di Napoli Federico II, Dipartimento di Matematica e Applicazioni ``R. Caccioppoli'',
Complesso Monte S. Angelo, via Cintia, 80126 Napoli, Italy;\\
e-mail: {\tt francesco.chiacchio@unina.it,  mercaldo@unina.it}}

\setcounter{footnote}{2}
\footnotetext{
Martin-Luther-University of Halle, Landesstudienkolleg, 06114 Halle, Paracelsusstr. 22, Germany,  
e-mail: {\tt friedemann.brock@studienkolleg.uni-halle.de}
}
\setcounter{footnote}{2}
\footnotetext{
SAMM, UR 4543 Universit\'e Paris 1 Panth\'eon-Sorbonne, FR 2036 CNRS, 90, rue de Tolbiac, 75013 Paris, France,
e-mail: {\tt gisella.croce@univ-paris1.fr}
}

\begin{abstract} 
We study the shape of solutions to certain variational problems  in Sobolev spaces with weights 
that are powers of $|x|$. In particular, we detect situations when the extremal functions lack symmetry properties such as radial symmetry and antisymmetry. We also prove an isoperimetric inequality for the first nonzero eigenvalue of a weighted Neumann problem.

\medskip

\noindent
{\sl Key words: Rayleigh quotient, foliated Schwarz symmetry,  Bessel functions,  eigenfunction, breaking of symmetry}  
\rm 
\\[0.1cm]
{\sl 2000 Mathematics Subject Classification: 49J40, 49K20, 49K30}  
\rm 
\end{abstract}
\maketitle

\section{Introduction}
In this paper, we study the shape of minimizers of certain variational problems defined with power-type weights, and we exhibit some symmetry-breaking phenomena. These questions have attracted considerable interest in the literature.

Consider for example the Rayleigh quotient
\begin{equation}
\label{Rayleigh}
Q_{p,q, \gamma } (v):=  
\frac{\displaystyle{\int_B  |\nabla v|^p \, dx} }{\displaystyle
{\left( \int_{ B }  |v|^q  |x|^{\gamma }  \, dx \right) ^{p/q} }} , \quad v\in W^{1,p} (B) \setminus \{ 0\} ,
 \end{equation}
where $B$ denotes the unit ball, centered at the origin  in $\mathbb{R}^N$, 
$N\geq 2$, $p>1$ and $q\geq p$. 
\\
Variational problems of the type 
\begin{equation}
\label{smets} 
\inf \left\{ Q_{2,q, \gamma }(v): \ v\in W^{1,2}_0 (B) \setminus \{ 0\} \right\}\,, 
\end{equation}
with $2<q<2^* $,  have been studied in \cite{N2001}, \cite{KN}, 
 \cite{ByeonWang} and \cite{smets}. 
Similar problems with non-local operators have been studied in \cite{Shcheglova} (see also \cite{mercuri} and \cite{ABCMP}, where different weights are considered).
Typically the authors exihibit radial symmetry-breaking phenomena, showing that the minimizers are non-radial for some parameter values and radial for others. For example, as shown in \cite{smets}
for $2<q<2^*$, there exists a $\gamma^*>0$ such that for $\gamma>\gamma^*$ the minimizers are not radial. Instead, for  every $n\in \N$, there exists $\delta_n\to 0^+$, such that the unique minimizer
is  {radial}, if $\gamma\leq n$, $2\leq q\leq 2+\delta_n$.
\\ 
Furthermore, significant interest has been devoted to the shape of sign-changing minimizers of integral functionals, see for example \cite{Girao-Weth}, \cite{Weth}, \cite{BDNT},\cite{Parini-Weth} and \cite{BCGM}. 
In \cite{Girao-Weth}, Girao and Weth studied the symmetry properties of the minimizers of the problem
\begin{equation}
\label{girao-weth}
\inf \left\{ Q_{2,q,0} (v) \equiv \frac{\displaystyle{ \Vert |\nabla v| \Vert _2 ^2 }}{\displaystyle{ \Vert v\Vert _q ^2 }}: \, v
\in W^{1,2}(B)\setminus \{ 0\} ,\ \ \int_{B} v \, dx =0  \right\} 
\end{equation}
for $2\leq q<2^*$. 
They proved that the minimizers are foliated Schwarz symmetric.
This means that
they are  symmetric with respect to  reflection about some line $\R e$ and decreasing w.r.t. the angle $\arccos[\frac{x}{|x|}\cdot e]\in (0,\pi)$. 
Further, another interesting phenomenon related to the shape of the minimizers was pointed out for problem (\ref{girao-weth}):
if $p$ is close to 2, then any minimizer is antisymmetric w.r.t.  reflection
about the hyperplane $\{x\cdot e=0\}$.
In contrast to this, the minimizers are not anymore antisymmetric if $N=2$ and if $p$ is
sufficiently large.
A similar break of symmetry was already observed  in 
\cite{DGS}, \cite{BKN}, \cite{BK}, \cite{K}, \cite{N}, \cite{GGR} for the minimizers of
a one-dimensional problem,
$$
\inf \left\{ \frac{\displaystyle{ \Vert v' \Vert _p}}{\displaystyle{ \Vert v \Vert _q }} \, ,\quad v \in W^{1,p}((0,1))\setminus \{ 0\} , \  \ v(0)=v(1), \ \ \int_0^1 v\, dx=0 \right\} \,.
$$
More precisely, it has been shown that any minimizer is an antisymmetric function, if and only if $q\leq 3p$ (see also \cite{CD} and \cite{GN} for a more general constraint). 
For a recent survey article on the one-dimensional problem, we refer the reader to \cite{NS}.

In this paper, we study variational problems for Rayleigh-type quotients where both the numerator and the denominator carry weights which are powers of $|x|$.

Let $\Omega $ be a bounded domain in $\mathbb{R} ^N $, $N\geq 2 $, with Lipschitz boundary  containing the origin, and define 
\begin{equation}
\label{rayleighgeneral}
R_{p,q,\alpha , \gamma } (v) 
:=
\frac{\displaystyle{\int_{\Omega }   |\nabla v|^p \, |x|^{\alpha } \, dx }}{\displaystyle{\left(\int_{\Omega }   |v|^q \,  |x|^{\gamma }
\, dx   \right) ^{p/q}}} , \quad \ v\in W ^{1,p} (\Omega , |x|^{\alpha } , |x|^{\alpha} ) ,
\end{equation}
where
\begin{equation}
\label{basicconditions}
p,q \in [1, +\infty ) 
\end{equation} 
and the numbers $\alpha ,\gamma \in \mathbb{R}$ satisfy certain conditions. (The definitions of weighted function spaces, such as $W^{1,p} (\Omega , |x|^{\alpha }, |x|^{\beta } ) $, will be given in Section 2).    
We focus on two variational problems, one with Dirichlet boundary conditions and one with a mean value condition:
\begin{eqnarray}
\label{problemPD}
({\bf P}^D ) & & \inf \left\{ 
R_{p,q, \alpha , \gamma } (v): \ v\in W_0 ^{1,p} (\Omega , |x|^{\alpha } , |x|^{\alpha} ) \right\} =:\lambda ^D , 
\\
\label{problemPM}
({\bf P}^M ) & & \inf \left\{ 
R_{
2,q, \alpha , \alpha } (v) : \ v\in W ^{1,2} (\Omega , |x|^{\alpha } , |x|^{\alpha} ) , \ \ \int_{\Omega } |x|^{\alpha } v\, dx =0 \right\} =:\lambda ^M . 
\end{eqnarray}
We study the shape of solutions to these problems, and in particular, we detect situations when the extremal functions lack symmetry properties such as radial symmetry and antisymmetry. To state our results we will use the assumptions on $p, q, N, \alpha, \gamma$ that guarantee the existence of the minimizers given by Theorem \ref{2.4} in Section 2. More precisely, for problem 
$ ({\bf P}^{D}  ) $ we will prove the following result:
\begin{theorem}
\label{3.1}
Assume that $\Omega $ is a ball $B$, centered at the origin, $q\in (p, q_0 )$, where $q_0 $ is defined by (\ref{q0def}) below and  $0\leq \alpha <N(p-1) $. Then there exists a number $\gamma ^* \geq \alpha $ such that the minimizer of $ ({\bf P}^{D}  ) $ is not radially symmetric if $\gamma >\gamma ^* $.
\end{theorem}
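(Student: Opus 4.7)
The plan is to argue by contradiction. Suppose the minimizer of $({\bf P}^D)$ were radial for arbitrarily large $\gamma$. I would then produce, for every sufficiently large $\gamma$, an explicit non-radial test function whose Rayleigh quotient lies strictly below the infimum
\[
\lambda^D_{\mathrm{rad}}(\gamma) := \inf\left\{R_{p,q,\alpha,\gamma}(v) : v \in W^{1,p}_0(B,|x|^\alpha,|x|^\alpha)\setminus\{0\} \text{ radial}\right\}
\]
taken over radial admissible functions, contradicting the assumption. Then $\gamma^{*}$ can be taken to be the supremum of the $\gamma\ge\alpha$ admitting a radial minimizer, and my argument shows this supremum is finite. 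Intuitively, the bulk of the mass of any competitor is pushed into an $O(1/\gamma)$-thick boundary layer, and the issue becomes whether concentrating that mass at a single boundary point (non-radial $v_\gamma$) is cheaper than distributing it over the whole sphere (radial $u_\gamma$); the hypothesis $q>p$ is exactly what makes concentration preferable.

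For the non-radial upper bound, I fix $\phi \in C_c^\infty(B_{1/2}(0))$ with $\phi \not\equiv 0$ and set $\epsilon := 1/\gamma$, $x_0 := (1-2\epsilon)e_1$, and $v_\gamma(x) := \phi((x-x_0)/\epsilon)$. The change of variables $y=(x-x_0)/\epsilon$ gives
\[
\int_B |\nabla v_\gamma|^p |x|^\alpha\,dx = \epsilon^{N-p}\!\!\int |\nabla\phi(y)|^p |x_0+\epsilon y|^\alpha\,dy \lesssim \gamma^{p-N},
\]
since $|x|$ is close to $1$ on the support. For the mass term I expand $|x_0+\epsilon y|^\gamma = |x_0|^\gamma\bigl(1+2\epsilon\, y\!\cdot\!\hat x_0/|x_0|+O(\epsilon^2)\bigr)^{\gamma/2}$; with $\gamma\epsilon = 1$ and $|x_0|^\gamma=(1-2/\gamma)^\gamma\to e^{-2}$ this converges (with an integrable dominant) on $\mathrm{supp}\,\phi$ to $e^{-2}e^{y\cdot e_1}$, so the denominator is of order $\gamma^{-N}$ and
\[
R_{p,q,\alpha,\gamma}(v_\gamma) \le C_1\,\gamma^{\,p-N+Np/q}.
\]

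For $\lambda^D_{\mathrm{rad}}(\gamma)$ I pass to the 1-D reduced problem on $(0,1)$ with weights $r^{N-1+\alpha}$ in the energy and $r^{N-1+\gamma}$ in the mass. The boundary-layer radial test function $u(r)=\psi(\gamma(1-r))$, with $\psi \in C_c^\infty([0,\infty))$ and $\psi(0)=0$, yields $\lambda^D_{\mathrm{rad}}(\gamma)\le C_2\,\gamma^{\,p-1+p/q}$ by the same rescaling. The matching lower bound $\lambda^D_{\mathrm{rad}}(\gamma)\ge c_2\,\gamma^{\,p-1+p/q}$ is the main technical point: from $u(r)=-\int_r^1 u'(s)\,ds$ and Hölder in the weighted measures one obtains a weighted Hardy inequality whose sharp constant, after the rescaling $s=\gamma(1-r)$ which flattens both weights inside the $O(1/\gamma)$ boundary layer, scales like $\gamma^{-(p-1+p/q)}$. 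The elementary comparison
\[
p-N+\frac{Np}{q} \;<\; p-1+\frac{p}{q} \iff (N-1)\!\left(\tfrac{p}{q}-1\right)<0
\]
then holds under $N\ge 2$ and $q>p$, so $R_{p,q,\alpha,\gamma}(v_\gamma)<\lambda^D_{\mathrm{rad}}(\gamma)$ for all $\gamma$ beyond some finite $\gamma^{*}\ge\alpha$, ruling out a radial minimizer. The principal obstacle is precisely this matching lower bound on $\lambda^D_{\mathrm{rad}}$; if a fully sharp constant is out of reach, the same conclusion follows from a concentration analysis of radial minimizing sequences, showing their mass cannot leak out of the $1/\gamma$-boundary layer and then computing the leading-order energy there.
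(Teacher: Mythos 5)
Your overall strategy matches the paper's: an upper bound $\lambda^D\lesssim\gamma^{p-N+Np/q}$ from a test function concentrated in an $O(1/\gamma)$ ball near the boundary, a lower bound $\lambda^D_{\mathrm{rad}}\gtrsim\gamma^{p-1+p/q}$ for the radial infimum, and the elementary exponent comparison $p-1+p/q>p-N+Np/q$ (equivalent to $(N-1)(1-p/q)>0$), which holds for $N\ge2$, $q>p$. Your non-radial test function calculation is essentially the paper's Lemma 3.3, where they use a rescaled first Dirichlet $p$-Laplace eigenfunction instead of a generic bump, with the same scaling outcome. Two remarks on presentation: the argument by contradiction is superfluous (once $\lambda^D<\lambda^D_{\mathrm{rad}}$ holds, no minimizer can be radial, full stop), and your upper bound $\lambda^D_{\mathrm{rad}}\le C_2\gamma^{p-1+p/q}$ is never used.

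Where you genuinely diverge from the paper — and where your proposal has a gap — is the lower bound on $\lambda^D_{\mathrm{rad}}$, which you yourself flag as the ``main technical point.'' You sketch a boundary-layer rescaling $s=\gamma(1-r)$ followed by a weighted Hardy inequality, and you hedge that a concentration analysis of minimizing sequences might be needed if the sharp constant is unavailable. This is left open. The paper closes this gap very cleanly (Lemma 3.4) with the substitution $z=r^{(\gamma+N)/(\alpha+N)}$, $w(z)=u(r)$, which maps the denominator weight $r^{\gamma+N-1}$ exactly to $z^{\alpha+N-1}$ (up to a constant factor $(\alpha+N)/(\gamma+N)$), while the numerator weight becomes $z^{\alpha+N-1-(\gamma-\alpha)(N+\alpha-p)/(\gamma+N)}\ge z^{\alpha+N-1}$ on $(0,1)$ whenever $N+\alpha-p\ge0$ and $\gamma\ge\alpha$. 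This gives the explicit monotone bound
\[
\lambda^{\mathrm{rad}}_{\gamma}\;\ge\;\left(\frac{\gamma+N}{\alpha+N}\right)^{p-1+p/q}\lambda^{\mathrm{rad}}_{\alpha},
\]
with no need for any Hardy-type inequality or sharp-constant analysis. I'd recommend adopting this change of variables: it is shorter, entirely elementary, and avoids the delicate uniformity-in-$\gamma$ issues your Hardy route would have to confront (in particular, showing the rescaled radial quotient stays bounded away from zero as the two boundary-layer weights $(1-s/\gamma)^{\alpha+N-1}$ and $(1-s/\gamma)^{\gamma+N-1}$ flatten and decay at different rates).
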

This generalizes the result of \cite{smets} stated above, for which $p=2$ and $\alpha=0$.
We will prove that $\lambda^D<\inf \left\{ R_{\gamma } (v):\, v\in W_0 ^{1,p} (B, |x|^{\alpha }, |x|^{\alpha } ) \setminus \{ 0\}, \, v \, \mbox{ radial}  \right\} =\lambda_\gamma^{rad}.
$ This strict inequality will be achieved constructing 
a precise test function to prove that
$\lambda^D\leq C_0 \gamma^{-N+p+Np/q}$ and using that 
 for all $\gamma \geq \alpha$ there holds
 $$\lambda _{\gamma } ^{rad}\geq \left( \frac{\gamma + N}{\alpha +N} \right)^{p-1 + p/q} \cdot \lambda _{\alpha} ^{rad} \qquad \forall \gamma \geq \alpha .$$

For problem $({\bf P}^M )$, we will first prove that the minimizers are foliated Schwarz symmetric, that is, 
they are  axially symmetric with respect to an axis passing through the
origin and nonincreasing in the polar angle from this axis (as the second eigenfunction of the Laplacian with Dirichlet boundary conditions in the unit disk of $\R^2$):
\begin{theorem}
\label{4.1}
Assume 
$-N<\alpha <N$ and $q\in [2, q_0 )$, where $q_0 $ is given by (\ref{q0def}) below.
Then every minimizer of $({\bf P}^M )$ is foliated Schwarz symmetric w.r.t. some point $P\in {\mathbb S}^{N-1}$.
\end{theorem}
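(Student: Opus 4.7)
My plan is to use the polarization (two-point rearrangement) technique, now standard in the study of foliated Schwarz symmetry of sign-changing minimizers (see Girao--Weth, Bartsch--Weth--Willem, and the references therein), adapted here to the radially weighted setting. Let $u$ be a minimizer of $({\bf P}^M)$; since $u\not\equiv 0$ and $\int_\Omega|x|^\alpha u\,dx=0$, $u$ changes sign. For any half-space $H\subset\R^N$ with $0\in\partial H$, denote by $\sigma_H$ the reflection across $\partial H$ and form the polarization
\[
u^H(x) = \begin{cases}\max\{u(x),u(\sigma_H x)\}, & x\in H,\\[2pt] \min\{u(x),u(\sigma_H x)\}, & x\in H^c.\end{cases}
\]
Because $|x|^\alpha$ is $\sigma_H$-invariant and the multiset $\{u^H(x),u^H(\sigma_H x)\}$ coincides with $\{u(x),u(\sigma_H x)\}$, the reflection substitution on $H^c$ gives
\[
\int_\Omega |x|^\alpha f(u^H)\,dx = \int_\Omega |x|^\alpha f(u)\,dx \quad\text{for every Borel }f,
\]
while the a.e.\ pointwise identity $|\nabla\max(u,u\circ\sigma_H)|^2 + |\nabla\min(u,u\circ\sigma_H)|^2 = |\nabla u|^2 + |\nabla(u\circ\sigma_H)|^2$ gives
\[
\int_\Omega |x|^\alpha |\nabla u^H|^2\,dx = \int_\Omega |x|^\alpha |\nabla u|^2\,dx.
\]
Choosing $f$ to be the identity, then $|\cdot|^q$, then $|\cdot|^{q-2}(\cdot)$, I conclude that $u^H$ respects the zero-mean constraint, has the same Rayleigh quotient, and has the same weighted $(q-1)$-moment as $u$; hence $u^H$ is itself a minimizer of $({\bf P}^M)$.

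Both $u$ and $u^H$ then satisfy the same Euler--Lagrange equation
\[
-\operatorname{div}(|x|^\alpha \nabla v) = \lambda^M\,|x|^\alpha \bigl(|v|^{q-2}v - c\bigr),\qquad c = \frac{\int_\Omega |x|^\alpha |v|^{q-2}v\,dx}{\int_\Omega|x|^\alpha\,dx},
\]
with the \emph{same} Lagrange multiplier $c$, thanks to the preservation of the $(q-1)$-moment. Setting $w := u^H - u$, one has $w\geq 0$ on $H\cap\Omega$, $w=0$ on $\partial H\cap\Omega$, and $w$ solves the linearised degenerate elliptic equation $-\operatorname{div}(|x|^\alpha\nabla w) = \bar c(x)\,|x|^\alpha w$ with $\bar c\in L^\infty$ (from $L^\infty$-regularity of subcritical minimizers). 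Since the hypothesis $-N<\alpha<N$ places $|x|^\alpha$ in the Muckenhoupt class $A_2$, the strong maximum principle for weighted elliptic operators (Fabes--Kenig--Serapioni) applies to $w$ and yields the dichotomy: either $w\equiv 0$ on $H\cap\Omega$ (equivalently, $u\geq u\circ\sigma_H$ on $H$), or $w>0$ strictly there (equivalently, $u<u\circ\sigma_H$ on $H$).

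With this polarization dichotomy available for every hyperplane $\partial H\ni 0$, foliated Schwarz symmetry of $u$ follows by the now-classical topological/geometric argument of Pacella and Weth (see also Smets--Willem): pick a point $Q\in\overline\Omega\setminus\{0\}$ where $u$ attains its maximum on a sphere $\{|x|=r\}$ on which $u$ is nonconstant, set $P := Q/|Q|\in{\mathbb S}^{N-1}$, and apply the dichotomy to half-spaces whose boundary contains the axis $\R P$ to obtain both the axial symmetry of $u$ about $\R P$ and its monotonicity in the polar angle measured from $P$.

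The main technical obstacle is the strong maximum principle for the linearised weighted equation with the singular/degenerate weight $|x|^\alpha$; the $A_2$-Muckenhoupt framework, valid precisely in the range $-N<\alpha<N$ assumed in Theorem~\ref{4.1}, is what makes this step succeed. The auxiliary $L^\infty$-regularity of subcritical minimizers needed to bound $\bar c$ follows from standard weighted Moser iteration in the $A_2$-setting.
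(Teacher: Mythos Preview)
Your argument follows the same polarization strategy as the paper: show that $u^H$ is again a minimizer (same weighted $L^q$-norm, same Dirichlet integral, same mean-zero constraint), deduce that $u$ and $u^H$ solve the same Euler equation with the same Lagrange multiplier, subtract to obtain a linear equation for $w=u^H-u\ge 0$ on $H$, derive the dichotomy ``$u=u_H$ or $\sigma_Hu=u_H$'' for every $H$, and conclude foliated Schwarz symmetry. The one genuine difference is the tool used for the dichotomy. The paper establishes $u,u_H\in C^2(\overline B\setminus\{0\})$ by classical elliptic regularity away from the degeneracy at the origin and then invokes the \emph{unique continuation principle}: if $\sigma_Hu\ne u_H$, there is an open set on which $w\equiv 0$, and unique continuation forces $w\equiv 0$ throughout. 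You instead appeal to the \emph{strong maximum principle} in the $A_2$-weighted setting (Fabes--Kenig--Serapioni), which the hypothesis $-N<\alpha<N$ makes available and which handles the singularity at $0$ without excising it. Both routes are correct; yours carries heavier machinery but treats the whole ball uniformly, while the paper's is more elementary once $C^2$-regularity off the origin is in hand. Your explicit check that the Lagrange multiplier coincides for $u$ and $u^H$ (via preservation of $\int|x|^\alpha|u|^{q-2}u$) is a detail the paper leaves implicit. For the final step, the paper invokes its Theorem~\ref{lemma_FSS}, which is exactly the packaged form of the Pacella--Weth/Smets--Willem argument you sketch, so the conclusions align.
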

This result has been already obtained for the case $\alpha =0$ in \cite{Girao-Weth} and we adapt their technique to prove it. We will use the two point rearrangement as a tool (see Definition \ref{2ptsrearrangement}) as well as some standard regularity results of solutions to elliptic PDE, after studying the Euler equation. This strategy could be probably adapted to other variational problems.

Our other results 
about problem $({\bf P} ^M)$ will be established in dimension $N=2$. 
First we deal with the case $q=2$. We denote $J_{\nu_1}$  the Bessel function of order $\nu_1$ and we prove  the following result
\begin{theorem}
\label{5.1}
Let $\Omega =D$, where $D$ denotes the unit ball in $\mathbb{R}^2 $, $q =2$ and 
and $|\alpha|<2$. Then, if $u$ is 
a minimizer of $({\bf P} ^M)$, there holds  
\begin{equation}
\label{expressionofu}
u(x)=\varphi_1(r) (A_1\cos \theta+B_1\sin \theta)\,, \quad x= (x_1 , x_2 )= (r\cos \theta , r\sin \theta )\in B \, ,
\end{equation}
where 
$$
\varphi_1(r)=r^{-\alpha/2}J_{\nu_1}(\sqrt{2\lambda } r)\, , \, \quad 0\le r\le 1\, .
$$
Here $\varphi_1 $  is a solution to the problem
\begin{eqnarray}
\label{fi}
&&
\left\{ 
\begin{array}{l}
r^2\varphi_1''(r)+(\alpha+1)r\varphi_1'(r)+(2\lambda r^2 - 1)\varphi_1(r)=0\,,
\\
\varphi _1 (r)>0, \quad 0<r\le 1\,\,, 
\\
\varphi '_1(1)=0 \,,
\end{array}
\right.
\\
&&
\nonumber
\lambda =\frac{x_{\nu_1,1}^2}2\, , \quad \nu_1=\sqrt{1+\frac{\alpha^2}4} \, ,
\end{eqnarray}
$x_{\nu_1,1}$ is  the first  positive root of the equation
\begin{equation*} 
-\frac{\alpha}{2} 
J_{\nu_1}(x)
+
x J'_{\nu_1}(x)=0\,,
\end{equation*}
and $A_1 , B_1 \in \R $ are arbitrary constants.
\end{theorem}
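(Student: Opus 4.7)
The strategy exploits the fact that, for $q=2$, $({\bf P}^M)$ is a constrained linear eigenvalue problem, which separates in polar coordinates into a family of Bessel-type ODEs indexed by the angular Fourier mode $k\geq 0$.

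\textbf{Euler--Lagrange and polar reduction.} Any minimizer $u$ solves
\[-\operatorname{div}(|x|^\alpha\nabla u)=\Lambda\,|x|^\alpha u\text{ in }D,\qquad \partial_\nu u=0\text{ on }\partial D,\qquad \int_D|x|^\alpha u\,dx=0,\]
with $\Lambda$ proportional to $\lambda^M$; the Lagrange multiplier for the mean-value constraint vanishes upon integrating the PDE against $1$ and using the Neumann condition. In polar coordinates the weighted operator reads $u_{rr}+(\alpha+1)r^{-1}u_r+r^{-2}u_{\theta\theta}$, so Fourier-expanding $u(r,\theta)=\sum_{k\geq 0}\varphi_k(r)(A_k\cos k\theta+B_k\sin k\theta)$ yields
\[r^2\varphi_k''+(\alpha+1)r\varphi_k'+(\Lambda r^2-k^2)\varphi_k=0,\qquad \varphi_k'(1)=0,\]
with $\varphi_k$ regular at $r=0$. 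The substitution $\varphi_k(r)=r^{-\alpha/2}\psi_k(r)$ converts this into the Bessel equation of order $\nu_k=\sqrt{k^2+\alpha^2/4}$, so $\psi_k(r)=J_{\nu_k}(\sqrt{\Lambda}\,r)$ up to a scalar. The Neumann condition then becomes the transcendental equation $-\tfrac{\alpha}{2}J_{\nu_k}(x)+xJ_{\nu_k}'(x)=0$ at $x=\sqrt{\Lambda}$; denote its $s$-th positive root by $x_{\nu_k,s}$.

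\textbf{Identifying the minimizing mode.} The $k$-th mode contributes the eigenvalues $x_{\nu_k,s}^2$. For $k\geq 1$ the mean-value constraint is automatic, whereas for $k=0$ it eliminates the ground state (which has constant sign) and raises the admissible minimum on the radial mode to $x_{\nu_0,2}^2$. I would rule out modes $k\geq 2$ directly from Theorem~\ref{4.1}: any pure $k$-mode with $k\geq 2$ has angular factor $\cos k\theta$ that is not monotone on $(0,\pi)$, hence cannot be foliated Schwarz symmetric. This reduces the problem to the comparison $x_{\nu_1,1}<x_{\nu_0,2}$.

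\textbf{Main obstacle and conclusion.} The decisive step is the strict inequality $x_{\nu_1,1}<x_{\nu_0,2}$, which rules out the second admissible radial eigenvalue as the minimizer. I expect it to follow from classical interlacing properties of zeros of Bessel functions and their derivatives, adapted to the mixed Neumann-type condition above (for $\alpha=0$ it reduces to the familiar $j'_{1,1}<j'_{0,2}$, and monotonicity in $\alpha$ via Sturm comparison on the Bessel equation propagates the inequality). Once this is established, the minimum of $\Lambda$ is attained only for $k=1$, its eigenspace is the two-dimensional plane spanned by $\varphi_1(r)\cos\theta$ and $\varphi_1(r)\sin\theta$ with $\varphi_1(r)=r^{-\alpha/2}J_{\nu_1}(\sqrt{\Lambda}\,r)$, and the Neumann condition at $r=1$ pins down $\sqrt{\Lambda}=x_{\nu_1,1}$; this yields the announced representation of $u$ after matching the paper's normalization convention for $\lambda$.
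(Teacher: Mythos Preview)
Your backbone matches the paper's: the Euler equation with vanishing Lagrange multiplier, separation of variables, the Bessel equation of order $\nu_k=\sqrt{k^2+\alpha^2/4}$, and the Neumann condition $-\tfrac{\alpha}{2}J_{\nu_k}(x)+xJ'_{\nu_k}(x)=0$ at $x=\sqrt{2\lambda}$. The two proofs diverge only in how the mode $k=1$ is singled out.

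For $k\geq 2$ your appeal to the foliated Schwarz symmetry of minimizers is a legitimate and rather elegant shortcut: if the minimum were attained on some mode $k\geq 2$, the pure mode $\varphi_k(r)\cos k\theta$ would itself be a minimizer and hence foliated Schwarz symmetric, which it visibly is not. The paper instead invokes a result of Landau on the function $F_\nu(x):=xJ'_\nu(x)/J_\nu(x)$, to the effect that the first positive root $x_{\nu,1}$ of $F_\nu(x)=\alpha/2$ is strictly increasing in $\nu$, giving $x_{\nu_1,1}<x_{\nu_k,1}$ for all $k\geq 2$ directly.

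Your handling of $k=0$, however, has a genuine gap. The assertion that the first nonzero radial eigenfunction has constant sign and is therefore excluded by the mean constraint is only correct for $\alpha<0$; for $\alpha>0$ that eigenfunction changes sign, and the comparison you propose ($x_{\nu_1,1}<x_{\nu_0,2}$) is not the relevant one. The paper treats the two signs of $\alpha$ separately. For $\alpha>0$ it uses the recurrence $xJ'_\nu=\nu J_\nu-xJ_{\nu+1}$ together with $\nu_0=\alpha/2$ to rewrite the Neumann condition for $k=0$ as $J_{\nu_0+1}(x)=0$, whence $x_{\nu_0,1}=j_{\nu_0+1,1}$; then the chain $x_{\nu_1,1}<j_{\nu_1,1}<j_{\nu_0+1,1}=x_{\nu_0,1}$ (the middle inequality because $\nu_1=\sqrt{1+\alpha^2/4}<1+\alpha/2=\nu_0+1$) disposes of the radial mode outright, with no recourse to the mean constraint. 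For $\alpha<0$ the same trick gives $x_{\nu_0,1}=j_{\nu_0-1,1}<j_{\nu_0,1}$, so the eigenfunction \emph{is} of one sign, and the paper then computes $\int_0^1 r^{1-\nu_0}J_{\nu_0}(j_{\nu_0-1,1}r)\,dr$ via a tabulated identity and shows it is nonzero, excluding that mode by the mean constraint. Your generic ``interlacing/Sturm comparison'' should be replaced by these two concrete Bessel computations.
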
 

\noindent We explicitely remark that 
formula (\ref{expressionofu}) can be rewritten as
\begin{equation*}
\label{expressionofu1} 
u(x) = C\cdot \varphi _1 (r) \cdot \cos (\theta -\theta _0) , \quad x \in B, 
\end{equation*}
for some numbers $C\in \mathbb{R} $ and $\theta _0 \in [0, \pi ]$, that is, 
$u$ is foliated Schwarz symmetric.

\noindent The proof of this theorem relies on a very fine analysis of the solutions of the Euler equation satisfied by a minimizer, written in separable polar coordinates.
\\
In dimension 2, up to a rotation, the foliated Schwarz symmetry implies that every minimizer is symmetric (even) with respect to $x_1$ for every $q\geq 2$. Moreover the above theorem tells us that that  for $q=2$
every minimizer is  antisymmetric (odd) with respect to $x_2$. We will prove that this does not occur 
when $\alpha <0$ and $q$ is sufficiently large, in the following result:
\begin{theorem}
\label{thmbreaking}
Let $-2<\alpha<0 $ and $N=2$. Then there is a number $\widetilde{q} >2$, such that, if $q>\widetilde{q}$ and $u$ is a corresponding minimizer of $({\bf P} ^M)$ which is symmetric (even) w.r.t. to $x_1 $, then $u$ is not antisymmetric w.r.t. $x_2 $. 
\end{theorem}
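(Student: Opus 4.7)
I argue by contradiction: suppose $u$ is a minimiser of $({\bf P}^M)$ which is even in $x_1$ and antisymmetric in $x_2$. Without loss of generality $u\geq 0$ on $D^+:=\{x_2>0\}$; antisymmetry forces $u$ to vanish on the diameter $\Sigma:=\{x_2=0\}\cap\overline D$, so $w:=u|_{D^+}$ lies in $W^{1,2}(D^+,|x|^\alpha,|x|^\alpha)$ with zero trace on $\Sigma$. Since $|x|^\alpha$ is even in $x_2$, a direct symmetrisation computation gives
\[
R_{2,q,\alpha,\alpha}(u)=2^{1-2/q}\mu_q,\qquad\mu_q:=\inf\!\Big\{\tfrac{\int_{D^+}|\nabla v|^2|x|^\alpha\,dx}{(\int_{D^+}v^q|x|^\alpha\,dx)^{2/q}}:v\in W^{1,2}(D^+,|x|^\alpha,|x|^\alpha),\ v|_\Sigma=0\Big\},
\]
and $w$ realises $\mu_q$ (otherwise the antisymmetric extension of the half-disk minimiser would strictly beat $u$). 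To produce a contradiction, I will construct a non-antisymmetric admissible competitor $\psi$ with $R_{2,q,\alpha,\alpha}(\psi)<2^{1-2/q}\mu_q$ for $q$ large.

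\textbf{The competitor.} Define $\psi:=w\chi_{D^+}-c_q$ with $c_q:=\int_{D^+}w|x|^\alpha\,dx/\int_D|x|^\alpha\,dx$, so $\int_D\psi|x|^\alpha\,dx=0$. Since $w|_\Sigma=0$, the trivial extension $w\chi_{D^+}$ lies in $W^{1,2}(D,|x|^\alpha,|x|^\alpha)$, so $\psi$ is admissible. Normalising $\int_{D^+}w^q|x|^\alpha\,dx=1$, the numerator $\int_D|\nabla\psi|^2|x|^\alpha\,dx$ equals $\mu_q$ and $R_{2,q,\alpha,\alpha}(\psi)=\mu_q/I_q^{2/q}$ with
\[
I_q:=\int_{D^+}|w-c_q|^q|x|^\alpha\,dx+c_q^{\,q}\!\int_{D^-}|x|^\alpha\,dx.
\]
The required strict inequality reduces to $I_q>2^{1-q/2}$, and since $2^{1-q/2}\to 0$ exponentially it suffices to exhibit a constant $K>0$ with $I_q\geq K$ for $q$ large.

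\textbf{Asymptotic estimates.} Two quantitative facts carry the argument: (i) $\mu_q=O(1/q)$ as $q\to\infty$, obtained by testing $\mu_q$ against a Moser-type logarithmic profile $\phi_M$ (with $M\sim q$) supported in a small ball $B_{r_1}(P_0)\subset D^+$ with $P_0$ bounded away from both the origin and $\Sigma$, on which $|x|^\alpha$ is comparable to the constant $|P_0|^\alpha$ so the classical Moser--Trudinger computation applies; (ii) the sharp concentration bound $\|w_q\|_{L^1(|x|^\alpha,D^+)}=O(1/q)$ for the minimiser itself, giving $c_q=O(1/q)$ and $qc_q=O(1)$. Together with the weighted Poincar\'e inequality on $D^+$ (valid since $\Sigma$ has positive $(N{-}1)$-measure and $-2<\alpha<2$), which gives $\int_{\{w<1\}}w^q|x|^\alpha\,dx\leq\int w^2|x|^\alpha\,dx\leq C_P\mu_q\to 0$, we conclude $\int_{\{w\geq 1\}}w^q|x|^\alpha\,dx\to 1$. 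On $\{w\geq 1\}$,
\[
(w-c_q)^q\geq w^q(1-c_q)^q\geq w^q\,e^{-qc_q/(1-c_q)}\geq K_0\,w^q
\]
for some constant $K_0>0$. Therefore $I_q\geq K_0\int_{\{w\geq 1\}}w^q|x|^\alpha\,dx\geq K_0/2$ for $q$ large, so $I_q>2^{1-q/2}$ eventually and $R_{2,q,\alpha,\alpha}(\psi)<R_{2,q,\alpha,\alpha}(u)$, contradicting the minimality of $u$.

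\textbf{Main obstacle.} The technical crux is the sharp concentration estimate (ii): the naive Poincar\'e-H\"older bound yields only $c_q=O(1/\sqrt{q})$, whence $(1-c_q)^q\to 0$ and the argument above breaks. The refined $O(1/q)$ decay requires a careful concentration analysis of the minimiser, exploiting the Euler--Lagrange equation $-\operatorname{div}(|x|^\alpha\nabla w_q)=\mu_q w_q^{q-1}|x|^\alpha$ together with weighted elliptic regularity and the two-dimensional Moser--Trudinger machinery. The hypotheses enter in two ways: $-2<\alpha$ guarantees integrability of the weight, which is necessary for the Moser test profile to have finite energy and for the weighted Poincar\'e inequality; the sign $\alpha<0$ makes $|x|^\alpha$ large near the origin, inflating $\int_D|x|^\alpha$ and tightening the bound on $c_q$ that provides the quantitative slack in the final comparison.
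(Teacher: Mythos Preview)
Your strategy is exactly the paper's: restrict the antisymmetric extremal to the upper half-disk, extend by zero to all of $D$, subtract the weighted mean, and show that this competitor $\psi$ strictly beats the antisymmetric infimum. Your reduction to the inequality $I_q>2^{1-q/2}$ is correct and is precisely what the paper proves.

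The gap is your ``main obstacle'' (ii), which is in fact a phantom obstacle created by aiming for the unnecessarily strong sufficient condition $I_q\geq K>0$. Raising your target inequality to the power $1/q$ and applying the triangle inequality in $L^q(D,|x|^\alpha)$ gives
\[
I_q^{1/q}=\bigl\|w\chi_{D^+}-c_q\bigr\|_{L^q(D,|x|^\alpha)}\;\geq\;\|w\chi_{D^+}\|_{L^q}-c_q\|1\|_{L^q}
=1-c_q\Bigl(\int_D|x|^\alpha\,dx\Bigr)^{1/q}.
\]
Since $c_q\to 0$ (your own ``naive'' Poincar\'e--H\"older bound $c_q\leq C\sqrt{\mu_q}$ already yields this, once $\mu_q\to 0$) and the weighted-measure factor tends to $1$, the right-hand side tends to $1$, while $2^{1/q-1/2}\to 2^{-1/2}<1$. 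Hence $I_q^{1/q}>2^{1/q-1/2}$, i.e.\ $I_q>2^{1-q/2}$, for all large $q$ --- with no concentration analysis, no Euler--Lagrange regularity, and no $O(1/q)$ rate on $c_q$. This is exactly the computation the paper carries out (in the gradient normalisation it appears as the factor $\bigl[1-|\int\bar u_q|x|^\alpha\,dx|\cdot M^{1/q-1}\cdot(\lambda^{as})^{1/2}\bigr]^{-2}\to 1$). Likewise, your estimate (i) can be weakened to the qualitative $\mu_q\to 0$, which is all the Ren--Wei/Moser test profile delivers and all that is required.

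In short: the architecture is right, but you took the $q$-th power one step too early; working with $I_q^{1/q}$ instead of $I_q$ dissolves the obstacle and completes the proof along the paper's lines.
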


\noindent The key point  in the proof of this result is to prove that 
$$\inf \left\{ R_{\alpha,q}(v): \  v\in W_0^{1,2} (B, |x|^\alpha, |x|^{\alpha }) \setminus \{ 0\}, v(x_1,-x_2)=-v(x_1,x_2) \right\}\,$$
tends to 0, as $q\to \infty$, in the same spirit as a result of \cite{Ren-Wei}. Unfortunately, the technique works only in dimension 2. However it is robust enough to have been used in \cite{Girao-Weth} and \cite{BCGM}.

Finally,  we study a shape optimization problem,
for $q=2$ and $\alpha >0$. More precisely, 
if $\Omega^{\sharp }$ denote the ball centered at the origin
such that 
\begin{equation*}
\int_{\Omega }\left\vert x\right\vert ^{\alpha }dx=\int_{\Omega ^{\sharp
}}\left\vert x\right\vert ^{\alpha }dx,
\end{equation*} 
and if we denote 
\begin{equation}
\mu _{1, \alpha} (\Omega)
:=\inf \left\{ \frac{
\dint_{\Omega }\left\vert \nabla v\right\vert ^{2}\left\vert x\right\vert
^{\alpha }dx}
{\dint_{\Omega }v^{2}\left\vert x\right\vert ^{\alpha }dx}:\
v\in W^{1,2}(B,|x|^{\alpha },|x|^{\alpha })\backslash \left\{ 0\right\} ,\ \
\int_{\Omega }v\left\vert x\right\vert ^{\alpha }dx=0\right\},  \label{mu_1}
\end{equation}
then the following result holds:
\begin{theorem} 
\label{TSW} 
Let $\Omega $ be a bounded Lipschitz domain in 
$\mathbb{R}^{N},$ with $N\geq 2,$ containing the origin, symmetric with respect to the origin and
let $\alpha \in (0,N).$ Then the infimum in \eqref{mu_1} is achieved and
\begin{equation} 
 \label{SW}
\mu _{1,\alpha}(\Omega)\leq \mu _{1,\alpha}(\Omega
^{\sharp }), 
\end{equation}
where equality holds if and only if $\Omega =\Omega ^{\sharp }$.
\end{theorem}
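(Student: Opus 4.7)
The plan is to adapt Weinberger's proof of the classical Szegő--Weinberger inequality to the weighted Neumann setting. Write $B_R=\Omega^{\sharp}$ and $\mu^\sharp:=\mu_{1,\alpha}(B_R)$. As in Theorem~\ref{5.1} for $N=2$, separation of variables applied to the Euler equation $-\mathrm{div}(|x|^{\alpha}\nabla u)=\mu|x|^{\alpha}u$ with Neumann boundary condition on $B_R$ shows that $\mu^\sharp$ is attained on eigenfunctions of the form $u(x)=\varphi(|x|)\,x_i/|x|$, where the radial profile $\varphi$ solves
\begin{equation*}
\varphi''+\frac{N-1+\alpha}{r}\varphi'+\Bigl(\mu^\sharp-\frac{N-1}{r^{2}}\Bigr)\varphi=0,\quad \varphi(0)=0,\quad \varphi'(R)=0.
\end{equation*}
The substitution $\varphi(r)=r^{(2-N-\alpha)/2}f(r)$ reduces this to Bessel's equation of order $\nu=\tfrac12\sqrt{(N+\alpha)^{2}-4\alpha}$, and the first positive profile has $\varphi>0$ and $\varphi'>0$ on $(0,R)$.

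Next I extend $\varphi$ to $[0,\infty)$ by $G(r):=\varphi(R)$ for $r\geq R$; since $\varphi'(R)=0$, $G$ is $C^{1}$. I use $\Phi_i(x):=G(|x|)x_i/|x|$ as competitors in \eqref{mu_1}, $i=1,\ldots,N$. The symmetry $\Omega=-\Omega$, the evenness of $|x|^{\alpha}$, and $\Phi_i(-x)=-\Phi_i(x)$ together imply $\int_\Omega\Phi_i|x|^{\alpha}\,dx=0$, so every $\Phi_i$ is admissible. The pointwise identities $\sum_i\Phi_i^{2}=G(r)^{2}$ and $\sum_i|\nabla\Phi_i|^{2}=G'(r)^{2}+(N-1)G(r)^{2}/r^{2}$, combined with the $N$ individual Rayleigh bounds, give
\begin{equation*}
\mu_{1,\alpha}(\Omega)\int_\Omega G(r)^{2}|x|^{\alpha}dx\leq\int_\Omega\Bigl[G'(r)^{2}+\tfrac{N-1}{r^{2}}G(r)^{2}\Bigr]|x|^{\alpha}dx.
\end{equation*}

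The main technical step, and the one I expect to be most delicate, is the monotonicity of $D(r):=G'(r)^{2}+(N-1)G(r)^{2}/r^{2}-\mu^\sharp G(r)^{2}$ on $(0,\infty)$. For $r>R$, $D(r)=\varphi(R)^{2}[(N-1)/r^{2}-\mu^\sharp]$ is manifestly strictly decreasing. For $0<r\leq R$ the plan is to differentiate $D$, use the ODE above to eliminate $\varphi''$, and then apply the elementary AM--GM inequality $\tfrac{4(N-1)}{r^{2}}\varphi\varphi'\leq\tfrac{2(N-1)}{r}(\varphi')^{2}+\tfrac{2(N-1)}{r^{3}}\varphi^{2}$; after cancellations this collapses to
\begin{equation*}
D'(r)\leq-\tfrac{2\alpha}{r}(\varphi')^{2}-4\mu^\sharp\varphi\varphi'<0.
\end{equation*}
The sign hypothesis $\alpha>0$ enters precisely here and is exactly what drives the Szegő--Weinberger-type comparison.

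To conclude, since $\Phi_i$ is an eigenfunction on $B_R$, integration by parts gives $\int_{B_R}D(r)|x|^{\alpha}dx=0$. Combined with the weighted-measure equality $|\Omega\setminus B_R|_{|x|^{\alpha}}=|B_R\setminus\Omega|_{|x|^{\alpha}}$ and the monotonicity of $D$, I obtain
\begin{equation*}
\int_\Omega D|x|^{\alpha}dx=\int_{\Omega\setminus B_R}D|x|^{\alpha}dx-\int_{B_R\setminus\Omega}D|x|^{\alpha}dx\leq D(R)\bigl(|\Omega\setminus B_R|_{|x|^{\alpha}}-|B_R\setminus\Omega|_{|x|^{\alpha}}\bigr)=0.
\end{equation*}
Inserted in the Rayleigh bound of the second paragraph, this yields $\mu_{1,\alpha}(\Omega)\leq\mu^\sharp=\mu_{1,\alpha}(\Omega^\sharp)$. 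Strict monotonicity of $D$ forces the bathtub inequality to be strict unless $\Omega$ and $B_R$ coincide up to sets of $|x|^{\alpha}$-measure zero, which by the Lipschitz regularity of $\Omega$ means $\Omega=\Omega^{\sharp}$; this gives the equality case. Existence of a minimizer of \eqref{mu_1} for $q=2$ and $\alpha\in(0,N)$ follows from Theorem~\ref{2.4} together with the compactness of the corresponding weighted Sobolev embedding.
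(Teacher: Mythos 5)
Your proof is correct and closely mirrors Weinberger's strategy, but the way you close the integral comparison differs from the paper's in a way worth noting. The paper keeps the numerator and denominator separate: it shows $N(r):=(G')^{2}+\tfrac{N-1}{r^{2}}G^{2}$ is strictly decreasing and $G^{2}$ is non-decreasing, and then applies the weighted Hardy--Littlewood rearrangement inequality twice (once to each) to replace $\Omega$ by $\Omega^{\sharp}$ in each integral. You instead bundle everything into the single function $D(r)=N(r)-\mu^{\sharp}G^{2}(r)$, prove $D'<0$ directly (your $D'$ computation reproduces the paper's $N'$ computation with an extra nonpositive $-2\mu^{\sharp}GG'$ term), observe that $\int_{B_{R}}D\,|x|^{\alpha}dx=0$ because the $\Phi_i$ are eigenfunctions on $B_R$, and then invoke only the bathtub principle together with the normalization $|\Omega|_{\alpha}=|\Omega^{\sharp}|_{\alpha}$. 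Your route is more elementary in that it avoids the weighted rearrangement machinery entirely, yields the inequality $\int_{\Omega}D\,|x|^{\alpha}dx\le 0$ in one stroke, and makes the equality case transparent (strict monotonicity of $D$ forces $|\Omega\,\triangle\,B_R|_{\alpha}=0$). The paper's two-step Hardy--Littlewood version tracks Weinberger's original more literally and is perhaps more readily generalizable.

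One point you should not leave implicit: you assert that $\mu^{\sharp}=\mu_{1,\alpha}(B_R)$ is attained by eigenfunctions of the form $\varphi(|x|)x_i/|x|$ (the $k=1$ spherical-harmonic mode) rather than by a purely radial ($k=0$) eigenfunction. This is exactly the content of Lemma~\ref{For_SW} in the paper, proved by writing both candidate eigenvalues as squares of roots of $-\tfrac{\beta}{2}J_{\nu}(x)+xJ_{\nu}'(x)=0$ for the two orders $\nu_0=\beta/2$ and $\nu_1=\sqrt{N-1+\beta^2/4}$ (where $\beta=N-2+\alpha$) and comparing those roots via the interlacing of Bessel zeros. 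Your sketch mentions the reduction to Bessel's equation but stops short of the comparison; without it the starting point $\mu^{\sharp}$ has degree-$N$ eigenspace of the stated form is unjustified. The same lemma is also what guarantees $\varphi'>0$ on $(0,R)$, which you use in establishing $D'<0$. Everything else in your argument is sound.
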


\noindent Clearly  $\mu _{1, 0} (\Omega)$ coincides with $\mu _{1}(\Omega )$,
the first nonzero eigenvalue  of the classical Neumann Laplacian.

\noindent Now let us briefly describe how Theorem \ref{TSW}  fits into the literature. Kornhauser and Stakgold conjectured in \cite{KS} that, among all planar simply
connected domains with fixed Lebesgue measure, the first nonzero eigenvalue
of the classical Neumann Laplacian achieves its maximum value if and only if the
domain is a disk.
This conjecture was later proved by Szeg\H{o} in \cite{S}. 
His proof uses the so-called ``harmonic transplantation", which, in turn, relies on the Riemann Mapping Theorem. Therefore, his result and techniques are confined to simply connected planar domains. In  \cite{W},  Weinberger generalized this result to any bounded smooth domain 
$\Omega $ in $\mathbb{R}^{N}$. It is worth noticing that
Weinberger's method has proven to be quite flexible. In fact, it has been
used, for example, in \cite{B}, \cite{AB}, \cite{CdB} 
and  \cite{ABD}. In fact, 
 some of its ideas, appropriately
adapted, are also used in the present paper. In Section 7, we will present the proof of Theorem \ref{TSW}, 
highlighting similarities and differences with Weinberger's proof. In particular, we will justify our additional assumption 
on the symmetry of $\Omega$. We do not see an obvious way to remove it
 and, therefore,  we leave this issue as a challenging open problem.

Let us outline the content of the next sections. Theorems \ref{3.1}, \ref{4.1}, \ref{5.1}, \ref{thmbreaking}, \ref{TSW} will be proved  in Sections 3, 4, 5, 6, 7 respectively.
Section $2$ contains several preliminary results. More precisely, we obtain continuous and compact embedding properties for some weighted function spaces, which are stated in Theorem \ref{functional_analysis}, Corollary \ref{functional_analysis_1} and Lemma \ref{poincare} and are based on an embedding theorem by T. Horiuchi (see \cite{Horiuchi}).  These properties allow us to prove  Theorem \ref{2.4} which gives the exi\-sten\-ce of solutions to the variational problems $({\bf P} ^D )$ and $({\bf P} ^M)$. Then we recall the definitions of the two-point rearrangement, foliated Schwarz symmetrization and foliated Schwarz symmetry given in Definitions \ref{2ptsrearrangement}, \ref{FSsymmetrization}, \ref{FSsymmetry} respectively. Moreover  we give some relations between these notions in Lemma \ref{lemmacavalieri} and Theorem \ref{lemma_FSS}.  
\\



\section{Preliminaries}
\label{prel}

\subsection{Embedding results and existence of solutions to $({\bf P} ^D)$ and $({\bf P} ^M )$}

In this subsection we assume that 
$\Omega$ is a bounded domain with Lipschitz boundary in $\R ^N $, $N\geq 1$,  containing the origin, $\alpha , \beta \in \R $ and $1\leq p\leq N$.
We denote by $L^p (\Omega , |x|^{\alpha })$ the weighted Lebesgue space of all measurable functions $u: \Omega \to \R $ with
$$
\Vert u\Vert _{p,\Omega , \alpha } := \left( \int_{\Omega }|u|^p   |x|^{\alpha } \, dx \right) ^{1/p} <\infty .
$$ 
The weighted Sobolev space $W^{1,p} (\Omega , |x|^{\alpha }, |x|^{\beta }  )$ is defined as the set of all functions $u\in L^p (\Omega , |x|^{\alpha }) $ having distributional derivatives $(\partial u / \partial x_i ) $, $i=1, \ldots ,N$, for which the norm
$$
\Vert u\Vert _{1,p, \Omega , \alpha , \beta } := \left[ \Vert u\Vert _{p, \Omega ,  \alpha  } ^p + \Vert |\nabla u| \Vert_{p, \Omega , \beta } ^p \right] ^{1/p} $$
is finite.  
It is well-known that, if 
\begin{equation}
 \alpha >-N,  \quad \beta >-N,
\label{alphabetacond1}
\end{equation} 
then $W^{1,p} (\Omega , |x|^{\alpha }, |x|^{\beta } ) $ is a reflexive Banach space, and if
\begin{equation}
\alpha < N(p-1), \quad  \beta <N(p-1) , 
\label{alphabetacond2}
\end{equation}
then
$C_0 ^{\infty} (\Omega ) \subset W^{1,p} (\Omega , |x|^{\alpha} , |x|^{\beta } )$, (see e.g. \cite{KufnerOpic}, p. 240 ff., and \cite{Leonardi}, p. 1054).
\\
Under the conditions (\ref{alphabetacond2}) and (\ref{alphabetacond1}) 
the space $W_0 ^{1,p} (\Omega , |x|^{\alpha } , |x|^{\beta})   
$ is defined as the closure of $C_0 ^{\infty } (\Omega )$ with respect to the norm $\Vert \cdot \Vert_{1,p, \Omega , \alpha , \beta } $. 
\\
\hspace*{0.5cm}
One can find a variety of embedding theorems for weighted Sobolev spaces into weighted Lebesgue spaces or into spaces of continuous functions in the literature (see, e.g., \cite{Il'in}, \cite{Maz'ya}, \cite{KufnerOpic}, \cite{Horiuchi}, \cite{Drabek-Kufner-Nicolosi}, \cite{Heinonen-etal}). 

The proof of the following result, which will be crucial in proving the existence of solutions to our variational problems, will be based on an embedding theorem by T. Horiuchi (see \cite{Horiuchi}).

 We use the notation $\hookrightarrow $ for continuous embedding and $\hookrightarrow \hookrightarrow$ for compact embedding. 
\begin{theorem}\label{functional_analysis}
Let $1\leq p < +\infty $ and $-N<\alpha \leq \gamma$, and define 
\begin{equation}
\label{q0def}
q_0 := \left\{ 
\begin{array}{ll}
+\infty & \mbox{ if }\ p\geq N \ \mbox{ and } \ \alpha \leq p-N 
\\[0.2cm]
\displaystyle{\frac{(2N-p+ \alpha )p}{N-p +\alpha }} &
\mbox{ if } \  p\geq N \ \mbox{ and }\  \alpha >p-N 
\\[0.3cm]
\displaystyle{\frac{Np}{N-p} } & \mbox{ if } N>p \ \mbox{ and } \ 
 \alpha \leq 0   
\\[0.3cm]
\displaystyle{\frac{(N+\alpha )p}{N-p +\alpha }} & \mbox{ if }\ N>p\ \mbox{ and }\  0< \alpha 
\end{array}
\right.
\quad .
\end{equation}
Then 
\begin{eqnarray}
\label{contemb}
&&
W^{1,p} (\Omega , |x|^{\alpha } , |x|^{\alpha } )
\hookrightarrow 
L^q (\Omega , |x|^{\gamma }) 
\\
\nonumber
&&
 \quad \mbox{ for every  \ $q\in [p, q_0]$, if $q_0 <+\infty $, and }
 \\
 \nonumber
 &&
 \quad \mbox{ for every  \ $q\in [p, +\infty )$, if $q_0 =+\infty $}
\end{eqnarray}
Furthermore,  
\begin{equation}
\label{compemb}
W^{1,p} (\Omega , |x|^{\alpha } , |x|^{\alpha } )
\hookrightarrow 
\hookrightarrow 
L^q (\Omega , |x|^{\gamma }) \quad \mbox{ for every } \ q\in [p, q_0).
\end{equation} 
\end{theorem}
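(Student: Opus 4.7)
The plan is to derive both embedding statements from an embedding theorem of Horiuchi, then promote continuous embeddings at lower exponents via log-convexity (Hölder) and upgrade the subcritical embeddings to compact ones by a cut-off plus diagonal subsequence argument. The division into four cases in the definition of $q_0$ simply records the different regimes of Horiuchi's hypotheses (supercritical versus subcritical, and $\alpha\le 0$ versus $\alpha>0$), so the main structural work has already been done once the correct statement is quoted.

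First I would recall Horiuchi's theorem from \cite{Horiuchi} in the form that is directly relevant: under assumptions on $p,N,\alpha,\gamma$ that match one of the four regimes above, one has the continuous embedding
\[
W^{1,p}(\Omega,|x|^{\alpha},|x|^{\alpha})\hookrightarrow L^{q_0}(\Omega,|x|^{\gamma}).
\]
Case by case, I would verify that the numerical formulas for $q_0$ are exactly the critical exponents permitted by Horiuchi: in the subcritical regime $N>p$, $\alpha\le 0$ one recovers the classical Sobolev exponent $Np/(N-p)$; for $\alpha>0$ one gets the Caffarelli–Kohn–Nirenberg-type exponent $(N+\alpha)p/(N-p+\alpha)$; in the borderline $p\ge N$, $\alpha>p-N$ one gets the corresponding threshold; and in the supercritical case $p\ge N$, $\alpha\le p-N$ one has $q_0=+\infty$. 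This step is bookkeeping, but careful case splitting is required because the hypothesis $-N<\alpha\le\gamma$ intervenes differently in each subcase.

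Next I would derive the continuous embedding for every $q\in[p,q_0]$. For $q=p$ the inclusion is trivial: since $\Omega$ is bounded and $\gamma\ge\alpha$, one has $|x|^{\gamma-\alpha}\le(\mathrm{diam}\,\Omega)^{\gamma-\alpha}$, so $L^{p}(\Omega,|x|^{\alpha})\hookrightarrow L^{p}(\Omega,|x|^{\gamma})$. For $p<q<q_0$ I would interpolate via Hölder in the weighted $L^{q}$–scale: writing $1/q=\theta/p+(1-\theta)/q_0$ with $\theta\in(0,1)$ one obtains
\[
\|u\|_{q,\Omega,\gamma}\le \|u\|_{p,\Omega,\gamma}^{\theta}\,\|u\|_{q_0,\Omega,\gamma}^{1-\theta},
\]
and both factors on the right are dominated by $\|u\|_{1,p,\Omega,\alpha,\alpha}$ by the previous observation and Horiuchi's bound. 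If $q_0=+\infty$, the same reasoning works with any $q_0'\in(q,+\infty)$ in place of $q_0$, which is allowed because Horiuchi yields continuous embeddings at every such $q_0'$.

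For compactness I would use the standard cut-off strategy. Given $\{u_n\}$ bounded in $W^{1,p}(\Omega,|x|^{\alpha},|x|^{\alpha})$ and $\varepsilon>0$, on $\Omega\setminus\overline{B_{\varepsilon}}$ the weights $|x|^{\alpha},|x|^{\gamma}$ are bounded above and below by positive constants, so $\{u_n\}$ is bounded in the unweighted $W^{1,p}(\Omega\setminus\overline{B_{\varepsilon}})$; the classical Rellich–Kondrachov theorem (trivially, for $p\ge N$) produces a subsequence converging in $L^{q}(\Omega\setminus\overline{B_{\varepsilon}},|x|^{\gamma})$ for any $q<q_0$. To control the contribution near the origin I would use Hölder with exponents $q_0/q$ and its conjugate:
\[
\int_{B_{\varepsilon}}|u_n-u_m|^{q}|x|^{\gamma}\,dx\le\Bigl(\int_{B_{\varepsilon}}|u_n-u_m|^{q_0}|x|^{\gamma}\,dx\Bigr)^{q/q_0}\Bigl(\int_{B_{\varepsilon}}|x|^{\gamma}\,dx\Bigr)^{1-q/q_0}.
\]
The first factor is bounded by the continuous embedding at $q_0$ already proved, and the second tends to $0$ as $\varepsilon\to 0$ because $\gamma\ge\alpha>-N$. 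A diagonal extraction over $\varepsilon=1/k$ then produces a subsequence that is Cauchy in $L^{q}(\Omega,|x|^{\gamma})$. If $q_0=+\infty$ the same computation is done with any finite $q_0'>q$ in the role of $q_0$.

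The main obstacle I foresee is not conceptual but bookkeeping: matching each of the four regimes defining $q_0$ to the right version of Horiuchi's hypotheses and making sure the endpoint $q=q_0$ in the continuous embedding truly follows from that theorem (as opposed to requiring a further limiting argument). Once this is checked, the interpolation for $p<q<q_0$ and the cut-off argument for compactness proceed in a uniform way and crucially exploit the strict inequality $q<q_0$, which is exactly why compactness fails precisely at the critical exponent.
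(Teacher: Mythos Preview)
Your overall strategy---Horiuchi at the top exponent, interpolation down to $q=p$, and a separate argument for compactness---is sound and matches the paper's architecture. Two points deserve comment.

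\textbf{The ``bookkeeping'' step hides a real maneuver.} You write that Horiuchi's theorem, applied with the weights $|x|^{\alpha}$ and $|x|^{\gamma}$, yields $W^{1,p}(\Omega,|x|^{\alpha},|x|^{\alpha})\hookrightarrow L^{q_0}(\Omega,|x|^{\gamma})$ once the regimes are matched. In fact Horiuchi's hypotheses include a condition of the type $b/q\le a/p$ (with $a,b$ the source and target weight exponents), and this can \emph{fail} for the pair $(\alpha,\gamma)$: e.g.\ when $N>p$, $\alpha<0$ and $\gamma>0$, one has $\gamma/q_0>0>\alpha/p$. The paper's proof does not apply Horiuchi to $(\alpha,\gamma)$ directly; instead, in each regime it picks auxiliary exponents $a\ge\alpha$ and $b\le\gamma$ so that Horiuchi applies to $(a,b)$, and then uses the trivial monotonicity $W^{1,p}(\Omega,|x|^{\alpha},|x|^{\alpha})\hookrightarrow W^{1,p}(\Omega,|x|^{a},|x|^{a})$ and $L^{q}(\Omega,|x|^{b})\hookrightarrow L^{q}(\Omega,|x|^{\gamma})$ on the bounded domain. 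This intermediate step is the genuine content of the case analysis and is not optional; you should make it explicit rather than file it under bookkeeping.

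\textbf{Compactness: a different, arguably cleaner route.} Here your approach diverges from the paper. The paper dispatches compactness in one line (``trivial for $p=q$, standard interpolation for $q<q_0$''), relying on compactness of the embedding at $q=p$ as the anchor. Your cut-off plus H\"older-tail argument is a genuine alternative: it avoids having to justify the base compactness $W^{1,p}(\Omega,|x|^{\alpha},|x|^{\alpha})\hookrightarrow\hookrightarrow L^{p}(\Omega,|x|^{\alpha})$ separately, and the smallness near the origin comes transparently from $\int_{B_\varepsilon}|x|^{\gamma}\,dx\to 0$ (since $\gamma>-N$) combined with the continuous embedding at $q_0$ already established. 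Both methods work; yours is more self-contained, the paper's is shorter once one accepts the $A_p$-weight machinery in the background.
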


 Lemma A below is a special case of  Theorem 3 of \S $3$ in  \cite{Horiuchi}, namely Cases A and B, with $k=1$, $j=0$ and $F=\{ 0\} $, (which implies $s=N$ - see the definition of the property $SP(s)$ on p. 373
of \cite{Horiuchi}).


\vspace{0.1 cm}
 
\noindent {\bf Lemma A:}
{\sl Let $\Omega $ be a domain in $\R ^N$ with $0\in \Omega $, $1\leq p\leq q$, $a>-N $ and $b>-N $. 
\\
{\bf (i) } Suppose that the following conditions hold:
\begin{eqnarray}
\label{abq1}
&& \frac{b}{q} \leq \frac{a}{p} ,
\\
\label{abq2} 
&& \frac{N-p+a}{p} \leq \frac{b+N}{q} \quad \mbox{ and }
\\
\label{abq3} 
&& \frac{b}{q} < \frac{N-p +a}{p}
\end{eqnarray}
Then 
\begin{equation}
\label{emb1}
W^{1,p} (\Omega , |x|^a, |x|^a) \hookrightarrow L^q (\Omega , |x|^b)  . 
\end{equation}
{\bf (ii) } Suppose that (\ref{abq1}) and 
\begin{equation}
\label{abq4}
  0\leq \frac{b}{q} = \frac{N-p+a}{p}
\end{equation}
hold. Then the embeddings (\ref{emb1}) hold for $p\leq q <+\infty $. 
}
\begin{corollary}
\label{2.1} 
The conditions (\ref{abq1})-(\ref{abq3}), respectively (\ref{abq1}) and (\ref{abq4}), are satisfied in the special cases below, which in turn leads to a corresponding range for the embedding (\ref{emb1}):
\\
{\bf 1. } If $p\geq N$, $a=p-N$ and $b\leq 0$, then (\ref{emb1}) holds for 
$p\leq q<+\infty $. 
\\
{\bf 2. } If $p\geq N$, $a>p-N$ and 
$-p+a\leq b\leq N-p+ a$, then (\ref{emb1}) holds for
$p\leq q \leq (N+b)p/ (N-p+a)$.
\\
{\bf 3. } If $p\geq N$ and  $b< N-p+a<0$, then  (\ref{emb1}) holds for 
$p\leq q < bp/ (N-p+a)$.
\\  
{\bf 4. } If $p<N$ and 
$b\leq a\leq p-N$, 
then  (\ref{emb1}) holds for 
$p\leq q\leq bp/a$.
\\
{\bf 5. } If $p<N$, $N-p+a >0$ and $b\leq a<0$, then (\ref{emb1}) holds for 
\\
$p\leq q \leq \min \{ bp/a;\  (b+N)p/(N-p+a)\} $.  
\\
{\bf 6. } If $p<N$, $a\geq 0$ and $-p+a \leq b\leq a$, then (\ref{emb1}) holds for 
$p\leq q \leq (b+N)p/(N-p+a) $. 
\end{corollary}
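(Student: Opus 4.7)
The corollary is a case-by-case verification that each of the six listed parameter regimes falls under the hypotheses of Lemma~A, together with an identification of the sharp range of admissible $q$. My plan is to treat each case in turn, carrying out the same short routine.

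In every case I would first substitute the prescribed relation between $a$, $b$, $N$, $p$ and compute $(N-p+a)/p$; its sign determines which inequality of Lemma~A(i) produces the effective upper bound on $q$. When $N-p+a>0$ (this is the situation in Cases~2, 5, 6), inequality (2.6) yields $q\le (N+b)p/(N-p+a)$, and at this endpoint (2.7) is strict because $b/(b+N)<1$ whenever $N>0$ and $b+N>0$, the latter being guaranteed by $b\ge -p+a\ge -N$; hence Lemma~A(i) already includes the endpoint. When $N-p+a<0$ (Cases~3, 4), inequality (2.6) becomes vacuous and the upper bound on $q$ comes from (2.7), which is strict and gives $q<bp/(N-p+a)$, or from (2.5), which gives $q\le bp/a$; in Case~4 one verifies that $bp/a<bp/(N-p+a)$, so (2.5) is the binding constraint.

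When $N-p+a=0$ (Case~1), both sides of (2.6) vanish and no upper bound on $q$ arises; (2.7) reads $b/q<0$ and is strict for $b<0$, while the boundary value $b=0$ falls under Lemma~A(ii) with (2.8) reading $0=0$, so the full range $p\le q<+\infty$ is admissible. Finally, in Case~5 both (2.5) and (2.6) produce competing nontrivial upper bounds, and one simply takes their minimum, which is exactly the formula in the statement. In every case the lower bound is built in by the standing assumption $q\ge p$.

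I expect no conceptual obstacle; the proof is a routine but careful computation. The one place where a slip is easy is sign tracking when $a$ or $b$ is negative, since the ratios $b/q$, $a/p$, and $(N-p+a)/p$ appearing in (2.5)--(2.7) can individually change sign across the six cases. I would therefore carry out every comparison by clearing denominators (all of which are positive), and verify at each step that the standing hypotheses $a,b>-N$ of Lemma~A hold throughout the ranges appearing in the statement.
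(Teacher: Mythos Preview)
The paper states Corollary~2.1 without proof, treating it as an immediate consequence of Lemma~A; your plan to verify the three inequalities (2.5)--(2.7) (or (2.5), (2.8)) case by case, tracking the sign of $N-p+a$ to identify which inequality is binding, is exactly the routine computation the authors are omitting, and it is correct. One small caveat: in Case~4 the boundary value $a=p-N$ gives $N-p+a=0$, so the quantity $bp/(N-p+a)$ you compare against is undefined there and that subcase should be handled separately (via (2.7) in the form $b/q<0$), and in Case~5 your justification ``$b\ge -p+a$'' for $b+N>0$ is not actually among the hypotheses---just invoke the standing assumption $b>-N$ of Lemma~A directly.
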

We are going to prove Theorem \ref{functional_analysis}.
\begin{proof} 
Let $-N <\alpha \leq a $ and $-N <b\leq \gamma $. Since $\Omega $ is bounded, there are positive constants $C_1 $ and $C_2 $, such that 
$$
|x|^a \leq C_1 |x|^{\alpha } , \ |x|^{\gamma } \leq C_2 |x|^b \quad \forall x\in \Omega \setminus \{0\} .
$$
This implies 
\begin{eqnarray}
\label{emb2} 
&& W^{1,p} (\Omega , |x|^{\alpha } , |x|^{\alpha} ) \hookrightarrow W^{1,p} (\Omega , |x|^a ,|x|^a) \quad \mbox{and}  
\\
\label{emb3}
&& L^q (\Omega , |x|^b ) \hookrightarrow L^q (\Omega , |x|^{\gamma } ).
\end{eqnarray}
Now we make the following choices of the numbers $a$ and $b$:
\\ 
{\bf (i) } If $p\geq N$  and $\alpha \leq p-N$, then we choose 
$a:= p-N$ and $b\in (-N, \min \{ 0; \gamma \} )$. 
In view of Corollary \ref{2.1}, {\bf 1.} we obtain (\ref{emb1}) for $p\leq q <+\infty $.
\\
{\bf (ii) } If $p\geq N$ and $\gamma \geq \alpha > p-N$,  then we choose 
$ a:=\alpha $ and $b:=\alpha $.
In view of Corollary \ref{2.1}, {\bf 2.} we obtain (\ref{emb1}) for $p\leq q\leq (2N-p+ a )p/(N-p+a )$.
\\  
{\bf (iii) } If $p<N$, $\alpha \leq 0$, then we choose 
$a:=\alpha $ and $b:= a N/(N-p)$. 
In view of Corollary \ref{2.1}, {\bf 5.} we obtain (\ref{emb1}) for $p\leq q\leq Np/(N-p) $.
\\
{\bf (iv) } If $p<N$ and $\alpha >0$, then we choose 
$a:= \alpha $ and $b:=  \alpha $.   
In view of Corollary \ref{2.1}, {\bf 6.} we obtain (\ref{emb1}) for $p\leq q \leq (N+a)p/(N-p+a ) $. 
\\
Using (\ref{emb2}) and (\ref{emb3}), this proves
(\ref{contemb}) with $q_0 $ given by (\ref{q0def}). The compact embeddings (\ref{compemb}) are trivial for $p=q$ and they follow from standard interpolation for $q<q_0 $.   
\end{proof}
\noindent
{\bf Remark 2.1.} 
{\sl Note that there is no continuous embedding of $W^{1,p} (\Omega , |x|^{\alpha } , |x|^{\alpha } )$ into $L^q (\Omega , |x|^{\alpha }) $ when $N>p$, $0\leq \alpha $ and 
$q>p(N+\alpha )/(N-p +\alpha )$.
To see this, choose $B_R \subset  \Omega $ and $u\in C_0 ^{\infty} (\Omega )$ with $ \mbox{supp}\, u\subset B_R $. Setting
$$
u_t (x) := u(t^{-1} \cdot x), \quad (0<t\leq 1),
$$
we have
\begin{eqnarray*}
&&
\Vert u_t \Vert _{p, \Omega , \alpha } ^p =t^{N+\alpha} \cdot  
\Vert u_t \Vert _{p, \Omega , \alpha } ^p, \ \ 
\Vert u_t \Vert _{q, \Omega , \alpha } ^q =t^{N+\alpha} \cdot  
\Vert u_t \Vert _{q, \Omega , \alpha } ^q\ \mbox{ and}
\\
&&
\Vert |\nabla u_t| \Vert_{p, \Omega, \alpha } ^p = t^{N+\alpha -p } \cdot    
\Vert |\nabla u| \Vert_{p, \Omega, \alpha } ^p.
\end{eqnarray*}
It follows that
$$
\frac{
\Vert u_t \Vert_{1, p, \Omega , \alpha  , \alpha } }{\Vert u_t \Vert_{q, \Omega , \alpha }} \to 0 \ \mbox{ as }\ t\to 0,
$$
and in particular,
$$
\frac{\Vert |\nabla u_t |\Vert _{p, \Omega , \alpha }}{\Vert u_t\Vert_{q, \Omega , \alpha }} 
 \rightarrow 0 \ \mbox{ as }\ t\to 0.
$$ 
From this the claim follows. }
\\[0.1cm]
By our assumptions,  $W_0 ^{1,p} (\Omega , |x|^{\alpha }, |x|^{\alpha })$ is a closed subspace of $W^{1,p} (\Omega , |x|^{\alpha }, |x|^{\alpha } )$. 
Hence we have the following
\begin{corollary}
\label{functional_analysis_1}
If $\alpha <N(p-1)$ and $\gamma <N(p-1)$, then the assertion of Theorem 2.1 holds with $W_0 ^{1,p} (\Omega , |x|^{\alpha } , |x|^{\alpha })$ in place of   
$W ^{1,p} (\Omega , |x|^{\alpha } , |x|^{\alpha })$.
\end{corollary}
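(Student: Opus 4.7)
My plan is to deduce this corollary directly from Theorem \ref{functional_analysis} via the elementary fact that continuity and compactness of a linear embedding pass to any closed subspace.

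First I would invoke the discussion preceding the corollary: since $\alpha>-N$ (inherited from the hypothesis $-N<\alpha\leq\gamma$ of Theorem \ref{functional_analysis}) and $\alpha<N(p-1)$, the inclusion $C_0^\infty(\Omega)\subset W^{1,p}(\Omega,|x|^\alpha,|x|^\alpha)$ holds. Hence $W_0^{1,p}(\Omega,|x|^\alpha,|x|^\alpha)$, defined as the $\Vert\cdot\Vert_{1,p,\Omega,\alpha,\alpha}$-closure of $C_0^\infty(\Omega)$, is a well-defined closed linear subspace of $W^{1,p}(\Omega,|x|^\alpha,|x|^\alpha)$, as explicitly noted in the excerpt just above the corollary.

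Next I would apply Theorem \ref{functional_analysis} directly: for each admissible $q$ it furnishes the continuous embedding $\iota:W^{1,p}(\Omega,|x|^\alpha,|x|^\alpha)\hookrightarrow L^q(\Omega,|x|^\gamma)$, with $\iota$ moreover compact in the range $q\in[p,q_0)$. Restricting $\iota$ to the closed subspace $W_0^{1,p}(\Omega,|x|^\alpha,|x|^\alpha)$ preserves continuity tautologically, because the norm on the subspace is the restriction of the ambient norm. For compactness I would argue that any bounded sequence $(u_n)\subset W_0^{1,p}(\Omega,|x|^\alpha,|x|^\alpha)$ is a fortiori bounded in $W^{1,p}(\Omega,|x|^\alpha,|x|^\alpha)$, so compactness of $\iota$ extracts a subsequence convergent in $L^q(\Omega,|x|^\gamma)$; the limit need not be shown to lie back in $W_0^{1,p}$ for the compactness statement. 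This gives precisely the analogues of \eqref{contemb} and \eqref{compemb} with $W_0^{1,p}$ replacing $W^{1,p}$.

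The extra hypothesis $\gamma<N(p-1)$ does not enter the embedding argument itself; it serves only to keep $\gamma$ in the range where the weighted Sobolev machinery is consistently used elsewhere in the paper (notably when Corollary \ref{functional_analysis_1} is applied with the roles of $\alpha$ and $\gamma$ interchanged or iterated), and I would note this briefly. There is essentially no analytic obstacle: the only conceptual point is recognizing that this is a trivial restriction-to-closed-subspace consequence of Theorem \ref{functional_analysis}, all of whose genuine analytic content (Horiuchi's embedding and the case analysis encoded in Corollary \ref{2.1}) has already been established.
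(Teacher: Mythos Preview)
Your proposal is correct and matches the paper's approach exactly: the paper simply observes that, under the stated conditions, $W_0^{1,p}(\Omega,|x|^{\alpha},|x|^{\alpha})$ is a closed subspace of $W^{1,p}(\Omega,|x|^{\alpha},|x|^{\alpha})$ and deduces the corollary immediately from Theorem~\ref{functional_analysis}. Your additional remarks on why continuity and compactness restrict to closed subspaces, and on the role of the hypothesis $\gamma<N(p-1)$, are sound elaborations of what the paper leaves implicit.
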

We will also need the following Poincar\'{e}-type inequalities. 
\begin{lemma}\label{poincare}
Let $1\leq p\leq N$ and $-N<\alpha <N(p-1)$. Then there are positive constants $C_1 $, $C_2$, such that
\begin{eqnarray}
\label{poincare1} 
\Vert |\nabla u| \Vert_{p,\Omega , \alpha } 
 & \geq & C_1 \Vert u \Vert_{p,\Omega , \alpha } \quad \forall u\in W_0 ^{1,p} (\Omega , |x|^{\alpha }, |x|^{\alpha }), \quad \mbox{and}
 \\
 \label{poincare2} 
\Vert |\nabla u| \Vert_{p,\Omega , \alpha } 
 & \geq & C_2 \Vert u- u_{\Omega } \Vert_{p, \Omega , \alpha } \quad \forall u\in W ^{1,p} (\Omega , |x|^{\alpha }, |x|^{\alpha }),
 \\[0.1cm]
 \nonumber & & \mbox{where }\ u_{\Omega } := \frac{\displaystyle\int_{\Omega } u|x|^{\alpha } \, dx }{\displaystyle\int_{\Omega } |x|^{\alpha } \, dx }\, .
 \end{eqnarray}
 \end{lemma}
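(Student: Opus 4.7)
The plan is to prove both inequalities by the standard contradiction/compactness argument, leveraging the compact embedding from Theorem \ref{functional_analysis} and Corollary \ref{functional_analysis_1}. The key input is that under the hypotheses $1\leq p\leq N$ and $-N<\alpha<N(p-1)$, the exponent $q_0$ in (\ref{q0def}) strictly exceeds $p$, so the compact embedding
$$
W^{1,p}(\Omega,|x|^\alpha,|x|^\alpha)\hookrightarrow\hookrightarrow L^p(\Omega,|x|^\alpha)
$$
(and its $W_0^{1,p}$ counterpart) is available at the exponent $q=p$.

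For (\ref{poincare1}), I would assume toward a contradiction that there is a sequence $(u_n)\subset W_0^{1,p}(\Omega,|x|^\alpha,|x|^\alpha)$ with $\|u_n\|_{p,\Omega,\alpha}=1$ and $\||\nabla u_n|\|_{p,\Omega,\alpha}\to 0$. Then $(u_n)$ is bounded in the reflexive space $W_0^{1,p}(\Omega,|x|^\alpha,|x|^\alpha)$, so up to a subsequence it converges weakly to some $u$ in $W_0^{1,p}$ and, by Corollary \ref{functional_analysis_1}, strongly in $L^p(\Omega,|x|^\alpha)$. Lower semicontinuity of $v\mapsto\||\nabla v|\|_{p,\Omega,\alpha}$ under weak convergence forces $\nabla u=0$ a.e., whence $u$ is constant on the (connected) domain $\Omega$; strong $L^p$-convergence gives $\|u\|_{p,\Omega,\alpha}=1$, so this constant is nonzero. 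The final step is to rule this out using $u\in W_0^{1,p}(\Omega,|x|^\alpha,|x|^\alpha)$: a nonzero constant cannot be approximated in the $W^{1,p}$-norm by functions in $C_0^\infty(\Omega)$, since extending by zero outside $\Omega$ would produce a gradient measure supported on $\partial\Omega$.

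For (\ref{poincare2}), I would argue in exactly the same way, with the normalization $v_n:=(u_n-(u_n)_\Omega)/\|u_n-(u_n)_\Omega\|_{p,\Omega,\alpha}$, which satisfies $\|v_n\|_{p,\Omega,\alpha}=1$, the mean-value condition $\int_\Omega v_n|x|^\alpha\,dx=0$, and $\||\nabla v_n|\|_{p,\Omega,\alpha}\to 0$. By the (unrestricted) compact embedding from Theorem \ref{functional_analysis}, a subsequence converges strongly in $L^p(\Omega,|x|^\alpha)$ and weakly in $W^{1,p}(\Omega,|x|^\alpha,|x|^\alpha)$ to a constant function $v\equiv c$. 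Passing the mean-value condition to the limit gives $c\int_\Omega|x|^\alpha\,dx=0$, and since $\alpha>-N$ implies $\int_\Omega|x|^\alpha\,dx\in(0,\infty)$, we get $c=0$, contradicting $\|v\|_{p,\Omega,\alpha}=1$.

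The only nontrivial ingredient is the step in (\ref{poincare1}) that a constant in $W_0^{1,p}(\Omega,|x|^\alpha,|x|^\alpha)$ must vanish; this is the main technical point to justify carefully, and it can be handled either by a direct trace argument for weighted Sobolev spaces (available under the standing condition $\alpha<N(p-1)$) or by extending by zero to $\mathbb{R}^N$ and observing that the resulting function would have distributional gradient concentrated on $\partial\Omega$, which is incompatible with membership in a weighted $W^{1,p}$-space unless the constant is $0$. Everything else is a routine application of reflexivity, weak lower semicontinuity, and the compact embeddings proved in the previous theorem.
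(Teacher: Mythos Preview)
Your argument is correct. For (\ref{poincare2}) it coincides with the paper's proof, which simply says the proof goes through ``analogously as in the unweighted case $\alpha=0$'' using the compact embedding of $W^{1,p}(\Omega,|x|^\alpha,|x|^\alpha)$ into $L^p(\Omega,|x|^\alpha)$ and refers to Evans for the standard model.

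For (\ref{poincare1}), however, the paper takes a genuinely different route: rather than run a compactness/contradiction argument, it observes that the hypothesis $-N<\alpha<N(p-1)$ is exactly the condition for $|x|^\alpha$ to lie in the Muckenhoupt class $A_p$, hence to be $p$-admissible in the sense of Heinonen--Kilpel\"ainen--Martio, and then cites the weighted Poincar\'e inequality for $W_0^{1,p}$ directly from that theory. That approach is a one-line black-box citation and does not depend on the embedding results just proved. Your approach, by contrast, is more self-contained (it uses only Theorem~\ref{functional_analysis} and Corollary~\ref{functional_analysis_1} from the paper) but pays for it with the extra step of showing that a nonzero constant cannot belong to $W_0^{1,p}(\Omega,|x|^\alpha,|x|^\alpha)$; since $0$ is an interior point of $\Omega$, the weight is bounded above and below near $\partial\Omega$, so this step indeed reduces to the unweighted situation and goes through as you outline.

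One small technical remark: your appeal to reflexivity and weak lower semicontinuity is unnecessary and, at the endpoint $p=1$ allowed by the hypotheses, formally unavailable. You can bypass it entirely, since $\||\nabla u_n|\|_{p,\Omega,\alpha}\to 0$ already gives \emph{strong} convergence of the gradients to $0$; combined with the strong $L^p$-convergence of a subsequence of $(u_n)$ obtained from the compact embedding, this yields strong convergence in $W_0^{1,p}(\Omega,|x|^\alpha,|x|^\alpha)$ to a limit $u$ with $\nabla u=0$ a.e., without any weak-limit argument.
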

\begin{proof}
Since $-N<\alpha <N(p-1)$, the weight $|x|^{\alpha }$  belongs to the Muckenhoupt class $A_p$. Hence it is also $p$-admissible, which means that (\ref{poincare1}) holds,  (see \cite{Heinonen-etal}, Chapter 15 and formula (1.5)). 
\\ 
The proof of (\ref{poincare2}) can be carried out analogously as in the unweighted case $\alpha =0$, using the compactness of the embedding of $W^{1,p} (\Omega , |x|^{\alpha }, |x|^{\alpha }) $ into $L^p (\Omega , |x|^{\alpha})$, (compare \cite{Evans}, § 5.8.1, proof of Theorem 1).  
\end{proof}
We conclude this subsection 
with the following existence result.
\begin{theorem}
\label{2.4}
Let $1\leq p\leq N$, $-N<\alpha <N(p-1)$, $\gamma \geq \alpha $ and $q\in [p, p_0 )$. Then the problems $({\bf P}^D ) $ and $({\bf P} ^M ) $  have solutions and the corresponding minima $\lambda ^D $ and $\lambda ^M $ are positive.
\end{theorem}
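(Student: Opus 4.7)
The plan is to apply the direct method of the calculus of variations to both problems, using the compact embeddings of Corollary \ref{functional_analysis_1} (for $({\bf P}^D)$) and Theorem \ref{functional_analysis} (for $({\bf P}^M)$), together with the Poincar\'e-type inequalities of Lemma \ref{poincare}. Throughout, the standing assumptions guarantee that $W^{1,p}(\Omega,|x|^\alpha,|x|^\alpha)$ is a reflexive Banach space, and that $W_0^{1,p}(\Omega,|x|^\alpha,|x|^\alpha)$ is a closed (hence reflexive) subspace of it.

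For $({\bf P}^D)$, I would fix a minimizing sequence $\{u_n\}\subset W_0^{1,p}(\Omega,|x|^\alpha,|x|^\alpha)\setminus\{0\}$ and, exploiting the positive homogeneity of $R_{p,q,\alpha,\gamma}$, normalize it by $\Vert u_n\Vert_{q,\Omega,\gamma}=1$, so that $\Vert |\nabla u_n|\Vert_{p,\Omega,\alpha}^p\to\lambda^D$. The Poincar\'e inequality \eqref{poincare1} bounds $\Vert u_n\Vert_{p,\Omega,\alpha}$ in terms of $\Vert |\nabla u_n|\Vert_{p,\Omega,\alpha}$, so $\{u_n\}$ is bounded in $W_0^{1,p}(\Omega,|x|^\alpha,|x|^\alpha)$. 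By reflexivity, a subsequence converges weakly to some $u$ in this space. Since $\gamma\geq\alpha$ and $q<q_0$, Corollary \ref{functional_analysis_1} gives strong convergence $u_n\to u$ in $L^q(\Omega,|x|^\gamma)$, so $\Vert u\Vert_{q,\Omega,\gamma}=1$ and in particular $u\not\equiv 0$. Finally, the functional $v\mapsto\Vert |\nabla v|\Vert_{p,\Omega,\alpha}^p$, being convex and strongly continuous on $W_0^{1,p}(\Omega,|x|^\alpha,|x|^\alpha)$, is weakly lower semicontinuous; hence $R_{p,q,\alpha,\gamma}(u)\leq\lambda^D$, and $u$ is a minimizer.

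The argument for $({\bf P}^M)$ is entirely parallel. I normalize a minimizing sequence by $\Vert u_n\Vert_{q,\Omega,\alpha}=1$ while retaining the constraint $\int_\Omega u_n|x|^\alpha\,dx=0$; the Poincar\'e--Wirtinger inequality \eqref{poincare2}, whose right-hand side reduces to $\Vert u_n\Vert_{2,\Omega,\alpha}$ by the zero-mean condition, yields a uniform bound in $W^{1,2}(\Omega,|x|^\alpha,|x|^\alpha)$. Extracting a weakly convergent subsequence and invoking the compact embedding of Theorem \ref{functional_analysis}, I obtain a strong limit $u$ in $L^q(\Omega,|x|^\alpha)$ with $\Vert u\Vert_{q,\Omega,\alpha}=1$. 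The mean-value constraint passes to the limit, since $v\mapsto\int_\Omega v|x|^\alpha\,dx$ is a continuous linear functional on $L^q(\Omega,|x|^\alpha)$ (using $|x|^\alpha\in L^1(\Omega)$, which follows from $\alpha>-N$ and the boundedness of $\Omega$), so in particular it is continuous along the strong convergence just obtained. Weak lower semicontinuity of the Dirichlet-type weighted energy closes the argument.

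Positivity of $\lambda^D$ and $\lambda^M$ is then immediate: chaining the continuous embedding \eqref{contemb} with the appropriate Poincar\'e inequality yields, for every admissible $v$, an estimate of the form $\Vert v\Vert_{q,\Omega,\gamma}\leq C\Vert |\nabla v|\Vert_{p,\Omega,\alpha}$, whence $R_{p,q,\alpha,\gamma}(v)\geq C^{-p}>0$ uniformly on the admissible class. The only genuinely weighted-setting-specific point --- and the one place where the proof could hide a subtlety --- is the preservation of the mean-value constraint along the weak-to-strong passage to the limit in $({\bf P}^M)$; this is handled precisely by the integrability of the weight $|x|^\alpha$ noted above. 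All remaining steps are standard direct-method arguments, transported to the weighted setting through the embedding and Poincar\'e results established earlier in the section.
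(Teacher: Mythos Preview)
Your proposal is correct and follows exactly the route the paper takes: the paper combines Theorem~\ref{functional_analysis} with Lemma~\ref{poincare} to obtain the lower bounds $\Vert|\nabla u|\Vert_{p,\Omega,\alpha}\geq C'\Vert u\Vert_{q,\Omega,\gamma}$ on the admissible classes, and then simply writes that ``the assertions follow by standard arguments''. What you have written is precisely a careful unpacking of those standard arguments (normalization, boundedness via Poincar\'e, weak compactness by reflexivity, strong $L^q$ convergence via the compact embedding, preservation of the constraint, and weak lower semicontinuity of the gradient term), so there is no substantive difference in approach.
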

\begin{proof}
 From Theorem \ref{functional_analysis} and Lemma \ref{poincare} we deduce that there are positive constants $C'$ and $C^{\prime \prime}$ such that
\begin{eqnarray*}
&& \Vert |\nabla u | \Vert_{p, \Omega , \alpha } \geq C' \Vert u\Vert _{q, \Omega , \gamma } \,,
\qquad \forall
u\in W_0 ^{1,p} (\Omega , |x|^{\alpha } , |x|^{\alpha } ) , 
\\
&&
\Vert |\nabla u | \Vert_{2, \Omega , \alpha } \geq C^{\prime \prime} \Vert u\Vert _{q, \Omega , \alpha } \,,
\qquad \forall 
u\in W ^{1,2} (\Omega , |x|^{\alpha } , |x|^{\alpha } ) \ \mbox{ with }\ u_{\Omega } =0 ,
\end{eqnarray*}
and the assertions follow by standard arguments. 
\end{proof}

\subsection{Foliated Schwarz symmetry}
In this subsection we assume that $\Omega $ is a domain that is radially symmetric w.r.t.  the origin.
In other words, $\Omega $ is either an annulus, a ball, or the exterior of a ball in $\mathbb{R} ^N $.
If $u: \Omega \to \mathbb{R} $  is a measurable function, 
we will for convenience always extend $u$ onto $\mathbb{R} ^N $ by setting $u(x)=0 $ for $x\in \mathbb{R} ^N \setminus \Omega  $.
\begin{definition}
\label{2ptsrearrangement}
Let 
${\mathcal H} _0 $ be the family of open half-spaces $H$ in $\mathbb{R} ^N $ such that $0\in \partial H $. 
For any  $H \in {\mathcal H} _0 $, let $\sigma _H $ denote the reflection in $\partial H $. 
We write
$$
\sigma _H u (x) := u(\sigma _H x), \quad x\in \mathbb{R} ^N .
$$
The two-point rearrangement w.r.t. $H$  is given by
\[
u_H (x) := 
\begin{cases}
\max \{ u(x); u(\sigma _H x ) \} & \mbox{ if }\ x \in H ,
\\
\min \{ u(x); u(\sigma _H x ) \} & \mbox{ if } \ x\not\in H .
\end{cases}
\]  
\end{definition}
Note that one has $u= u_H $ if and only if  $u(x)\geq u(\sigma _H x )$ for all $x\in H$.
Similarly, $\sigma _H u= u_H $  if and only if   $u(x)\leq u(\sigma _H x) $ for all $x\in H$.
\\[0.1cm]
\hspace*{0.3cm}We will make use of the following properties of the two-point rearrangement.

\begin{lemma}\label{lemmacavalieri}
Let $H\in {\mathcal H} _0 $.
\begin{enumerate}
\item
If $A\in C([0, +\infty ),\mathbb{R})$, $u:\Omega \to \mathbb{R} $ is measurable and $A (|x|,u)\in L^1(\Omega)$, then  $A (|x|, u_H)\in L^1(\Omega)$ and 
$\displaystyle
\int_\Omega A (|x|, u)\, dx =\int_\Omega A (|x|, u_H)\, dx \, .
$
\item
If $u\in W^{1,2} (B, |x|^{\alpha } )$, then 
$\displaystyle 
\int_B  |\nabla u|^2 \, |x|^\alpha \, dx
=
\int_B  |\nabla u_H|^{2} \, |x|^\alpha \, dx\,.
$
\end{enumerate}
\end{lemma}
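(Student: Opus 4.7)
The plan is to exploit two symmetries of the setup: the reflection $\sigma_H$ is a linear isometry fixing the origin, so $|\sigma_H x|=|x|$, and the radial symmetry of $\Omega$ gives $\sigma_H(\Omega)=\Omega$. Hence both Lebesgue measure and the weight $|x|^\alpha$ are $\sigma_H$-invariant, and it suffices to track how the values of $u$, $u_H$, and their gradients are redistributed between the two halves $\Omega^+:=\Omega\cap H$ and $\Omega^-:=\sigma_H(\Omega^+)$.

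For item (1) I would split
\[
\int_\Omega A(|x|,u)\,dx=\int_{\Omega^+}A(|x|,u(x))\,dx+\int_{\Omega^-}A(|x|,u(x))\,dx,
\]
apply the change of variables $y=\sigma_H x$ in the second integral, and use $|\sigma_H x|=|x|$ together with $|\det D\sigma_H|=1$ to rewrite everything as
\[
\int_{\Omega^+}\bigl[A(|x|,u(x))+A(|x|,u(\sigma_H x))\bigr]\,dx.
\]
The pointwise observation is that for each $x\in H$ the unordered pair $\{u(x),u(\sigma_H x)\}$ coincides by Definition \ref{2ptsrearrangement} with $\{u_H(x),u_H(\sigma_H x)\}$, so the integrand is invariant when $u$ is replaced by $u_H$. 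This simultaneously yields the $L^1$ membership of $A(|x|,u_H)$ and the equality of integrals.

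For item (2) the same bookkeeping works once one knows that $u_H$ lies in $W^{1,2}(B,|x|^\alpha,|x|^\alpha)$ and that its weak gradient is obtained by the analogous rearrangement. The target identity is, for almost every $x\in\Omega^+$,
\[
\nabla u_H(x)=\begin{cases}\nabla u(x)&\text{if }u(x)\ge u(\sigma_H x),\\ \sigma_H\bigl(\nabla u(\sigma_H x)\bigr)&\text{if }u(x)<u(\sigma_H x),\end{cases}
\]
with the symmetric formula on $\Omega^-$; this is the classical polarization identity for Sobolev functions. One establishes it first for Lipschitz $u$ by a direct computation on the disjoint open sets $\{u>\sigma_H u\}$ and $\{u<\sigma_H u\}$ (on the coincidence set the two candidate gradients agree almost everywhere by the standard Sobolev zero-set lemma), and then extends by density. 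Since $\sigma_H$ is an orthogonal map, the pair $\bigl\{|\nabla u_H(x)|^2,|\nabla u_H(\sigma_H x)|^2\bigr\}$ is a permutation of $\bigl\{|\nabla u(x)|^2,|\nabla u(\sigma_H x)|^2\bigr\}$, so multiplying by the $\sigma_H$-invariant weight $|x|^\alpha$ and running the splitting/change-of-variables argument of (1) verbatim gives the energy identity.

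The main obstacle is the Sobolev-regularity step for $u_H$: the permutation argument in (1) is essentially measure-theoretic bookkeeping, but verifying that the pointwise gradient formula really is the distributional gradient of $u_H$, and that $u_H$ belongs to the weighted Sobolev space, requires a density/approximation argument which is mildly delicate because of the weight $|x|^\alpha$. This can be handled by mollifying away from the origin and using that $|x|^\alpha\in A_2$ (already invoked in Lemma \ref{poincare}) to approximate a general element of $W^{1,2}(B,|x|^\alpha,|x|^\alpha)$ by Lipschitz functions in the weighted norm, after which polarization commutes with the limit because the operation $u\mapsto u_H$ is nonexpansive in $L^2(B,|x|^\alpha)$. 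Once this ingredient is in place, the equality of Dirichlet energies is automatic.
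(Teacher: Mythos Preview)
Your argument is correct and follows exactly the same route as the paper: the pointwise permutation identity $\{u(x),u(\sigma_H x)\}=\{u_H(x),u_H(\sigma_H x)\}$ combined with $|\sigma_H x|=|x|$, then integration over $\Omega\cap H$. The paper's proof is in fact terser than yours---it simply writes down the two pointwise identities and integrates, without addressing the Sobolev regularity of $u_H$ or the gradient formula that you carefully justify via approximation and the $A_2$ property of the weight.
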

\begin{proof}
We observe that $|\sigma _H x|= |x|$, we have for a.e. $x\in H\cap \Omega $. Therefore
$$ 
A(|x|, u(x))+ A(|\sigma _H x| ,u(\sigma _H x))= 
A(|x|, u_H (x))+ A(|\sigma _H x| ,u_H (\sigma _H x))$$
and
$$
 |x|^\alpha |\nabla u(x)|^2 + |\sigma _H x|^\alpha|\nabla u(\sigma _H x)|^2 = 
|x|^\alpha|\nabla u_H (x)|^2  + |\sigma _H x|^\alpha|\nabla u_H (\sigma _H x)|^2.
$$
It is now sufficient to integrate these two equalities on $\Omega\cap H$.
\end{proof}
\noindent
\hspace*{0.5cm}Now we recall the definition of foliated Schwarz symmetrization of a function. Such a function is   axially symmetric with respect to an axis passing through the
origin and nonincreasing in the polar angle from this axis.
\begin{definition}
\label{FSsymmetrization}
If $u:\Omega \to \mathbb{R} $ is measurable, the {\sl foliated Schwarz symmetrization } $u^* $ of $u$ is defined as  the (unique) function satisfying the following properties:
\begin{enumerate}
\item
there is a function $w : [0, +\infty ) \times [0, \pi ) \to \mathbb{R } $, 
$w= w (r, \theta )$, which is nonincreasing in 
$\theta $, and 
\[
u^* (x) = w\left( |x| , \arccos (x_1 /|x| ) \right) , \quad (x \in \Omega );
\]  
\item
$
{\mathcal L} ^{N-1}  \{ x:\, a < u(x) \leq b,\, |x| =r \} = {\mathcal L} ^{N-1} \{ x:\, a < u^* (x) \leq b ,\, |x|=r \}$  
for all $a,b \in \mathbb{R}$ with $a<b$, and $r\geq 0$.
\end{enumerate}
\end{definition} 
\begin{definition}\label{FSsymmetry}
Let $P_N$ denote the point $(1,0, \ldots , 0)$, the 'north pole' of
the unit sphere ${\mathbb S} ^{N-1} $. 
We say that $u$ is {\sl foliated Schwarz symmetric  w.r.t.  $P_N$} if 
$u=u^*$ - that is, $u$ depends solely on $r$ and on $\theta $ - the 'geographical width' -, 
and is nonincreasing in $\theta $. 
\\
We also say that $u$ is {\sl foliated Schwarz symmetric w.r.t. a point $P\in {\mathbb S}^{N-1} $ } 
if there is a rotation about the origin $\rho $ such that $\rho (P_N)= P$, and 
$u(\rho (\cdot)) = u^* (\cdot )$.
In other words, a function $u:\Omega\to \R$ is foliated Schwarz symmetric with respect to $P$ if, for every $r>0$ and $c\in \R$,
the restricted superlevel set $\{x: |x|=r, u(x)\geq c\}$ is equal to $\{x: |x|=r\}$ or a geodesic ball in the
sphere $\{x: |x|=r\}$ centered at $rP$. In particular, $u$ is axially symmetric with respect to the axis $\R P$.
\\
\noindent 
Moreover
a measurable function  $u:\Omega \to \mathbb{R}$ is foliated Schwarz symmetric w.r.t. 
$P\in {\mathbb S}^{N-1} $ iff $u=u_H $ for all $H\in {\mathcal H} _0 $ with $P\in H$. 
\end{definition}
The next result was proved in \cite{BCGM}. It will be used in Section \ref{FSS}.
\begin{theorem}
\label{lemma_FSS} 
 Let $u\in L^{p} (\Omega )$ for some $p\in [1, +\infty )$, 
and assume that for every $H\in {\mathcal H} _0 $ one has either $u= u_H $, or $\sigma _H u= u_H $.
Then $u$ is foliated Schwarz symmetric w.r.t. some point $P\in {\mathbb S}^{N-1} $.
\end{theorem}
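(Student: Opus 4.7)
The plan is to reformulate the hypothesis in terms of the set $E:=\{e\in{\mathbb S}^{N-1}:u=u_{H_e}\}$, where $H_e:=\{x\in\R^N: x\cdot e>0\}$, and then to exhibit a direction $P\in{\mathbb S}^{N-1}$ such that $\{e: e\cdot P>0\}\subseteq E$; by the last equivalence stated in Definition \ref{FSsymmetry}, this will give the foliated Schwarz symmetry of $u$ with respect to $P$. Since $\sigma_H u=u_H$ is equivalent to $u=u_{H^c}=u_{H_{-e}}$, the hypothesis becomes $E\cup(-E)={\mathbb S}^{N-1}$. The main tool is the polarization-moment identity: for any nonnegative bounded radial $w$ with compact support in $\Omega$,
\[
\int_\Omega u(x)\,(x\cdot e)\,w(|x|)\,dx=\int_{H_e}\bigl[u(x)-u(\sigma_{H_e}x)\bigr]\,(x\cdot e)\,w(|x|)\,dx,
\]
obtained by changing variables $y=\sigma_{H_e}x$ in the $H_e^c$-piece. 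The right-hand side is $\geq 0$ whenever $u=u_{H_e}$, since then $u(x)\geq u(\sigma_{H_e}x)$ on $H_e$ and $(x\cdot e)w(|x|)\geq 0$ there. Integrability follows from $u\in L^p(\Omega)$ and $w$ bounded of compact support.

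If $E={\mathbb S}^{N-1}$, applying $u=u_{H_e}$ for both $e$ and $-e$ gives $u(x)\geq u(\sigma_{H_e}x)$ and $u(\sigma_{H_e}x)\geq u(x)$, so $u=u\circ\sigma_{H_e}$ for every $e$; since reflections through hyperplanes containing the origin generate $O(N)$ (Cartan--Dieudonn\'e), $u$ is radial and hence foliated Schwarz symmetric with respect to any $P$. Otherwise there exists $e^*\in{\mathbb S}^{N-1}$ with $u\neq u_{H_{e^*}}$, and the hypothesis forces $u=u_{H_{-e^*}}$; moreover the set $A:=\{x\in H_{-e^*}: u(x)>u(\sigma_{H_{-e^*}}x)\}$ has positive Lebesgue measure, because otherwise equality a.e.\ on $H_{-e^*}$ would yield $u=u\circ\sigma_{H_{e^*}}$ and hence $u=u_{H_{e^*}}$ too, contradicting the choice of $e^*$. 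By Fubini there is a positive-measure set of radii $r_0$ for which $A\cap S_{r_0}$ has positive $(N-1)$-measure; taking $w$ to be the indicator of a thin annulus around one such $r_0$, the identity above applied with $e=-e^*$ gives
\[
v\cdot(-e^*)=\int_{H_{-e^*}}\bigl[u(x)-u(\sigma x)\bigr]\,(-e^*\cdot x)\,w(|x|)\,dx>0,\qquad v:=\int_\Omega u(x)\,x\,w(|x|)\,dx,
\]
so $v\neq 0$. Set $P:=v/|v|\in{\mathbb S}^{N-1}$.

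For every $e\in E$, the polarization-moment identity gives $v\cdot e\geq 0$, i.e., $e\cdot P\geq 0$. Suppose now $e\cdot P>0$; if one had $e\notin E$, the hypothesis would give $-e\in E$, hence $-e\cdot P\geq 0$, contradicting $e\cdot P>0$. Therefore $e\in E$, showing $\{e:e\cdot P>0\}\subseteq E$. This is exactly the condition $u=u_H$ for every $H\in{\mathcal H}_0$ with $P\in H$, which by Definition \ref{FSsymmetry} is the sought foliated Schwarz symmetry of $u$ with respect to $P$. The most delicate step is the reduction from $u\neq u_{H_{e^*}}$ to positive Lebesgue measure of the strict-inequality set $A$ and then, by a Fubini argument, to a radial weight $w$ producing a nonzero first moment $v$; the remaining steps are routine manipulations of the two-point rearrangement combined with the symmetric reformulation $E\cup(-E)={\mathbb S}^{N-1}$ of the hypothesis.
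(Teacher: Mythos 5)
Your proof is correct, and it is essentially the standard ``first-moment'' (barycenter) argument that BCGM uses for this lemma (following the scheme of Gir\~ao--Weth): parametrize the half-spaces by $e\in{\mathbb S}^{N-1}$, reformulate the hypothesis as $E\cup(-E)={\mathbb S}^{N-1}$, exploit the moment identity $\int_\Omega u(x)(x\cdot e)\,w(|x|)\,dx=\int_{H_e}[u(x)-u(\sigma_{H_e}x)](x\cdot e)\,w(|x|)\,dx$ to show $v\cdot e\ge 0$ on $E$, and use a radial cutoff $w$ supported near a well-chosen sphere (the Fubini step) to guarantee $v\neq 0$ when $u$ is not radial, then read off $P=v/|v|$. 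One small remark: in the case $E={\mathbb S}^{N-1}$, invoking Cartan--Dieudonn\'e to deduce radiality is a slight (and mildly delicate, because the a.e.\ null sets depend on $e$) detour; since the goal is only to exhibit a $P$ with $\{e:e\cdot P>0\}\subseteq E$, that inclusion is vacuously true for any $P$ when $E$ is the whole sphere, so you can skip the radiality discussion entirely. Otherwise the argument is complete and matches the intended proof.
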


\section{Non-radiality for solutions to problem $({\bf P}^D)$}
\noindent
\hspace*{0.5cm}In this section we study  problem $({\bf P} ^D)$ when $\Omega =: B$   is the unit ball centered at the origin.
\\
Let  $\alpha $, $p$ and $q$ be fixed.  
For any number $\gamma \geq \alpha $ we write for convenience  
\begin{eqnarray*}
&& R_{ \gamma } (v):= R_{p,q, \alpha , \gamma} (v) 
, \quad v\in W_0 ^{1,p} (B, |x|^{\alpha }, |x|^{\alpha } ), 
\\
&& ({\bf P} _{\gamma }) := ({\bf P} ^D) \ \mbox{ and } \ \lambda _{\gamma } := \lambda ^D. 
\end{eqnarray*}
Denote
\begin{equation}
\label{lambdaradial}
\lambda _{\gamma } ^{rad} := \inf \left\{ R_{\gamma } (v):\, v\in W_0 ^{1,p} (B, |x|^{\alpha }, |x|^{\alpha } ) \setminus \{ 0\}, \, v \, \mbox{ radial}  \right\} .
\end{equation}
As already mentioned in the Introduction, we are going to prove Theorem \ref{3.1}.

We merely need to show that
\begin{equation}
\label{lambdaineq1}
\lambda _{\gamma } <\lambda _{\gamma } ^{rad},
\end{equation}
if $\gamma $ is large enough. 
Our approach is similar as in \cite{Girao-Weth}.
For the proof of (\ref{lambdaineq1}) we need two
lemmata.
\begin{lemma}
\label{3.3}
There exists a number $C_0 >0$, independent of $\gamma $, such that for all $\gamma \geq 3$,
\begin{equation}
\label{lambdaineq2}
\lambda _{\gamma } \leq C_0 \cdot \gamma ^{-N + p+ Np/q} .
\end{equation}
\end{lemma}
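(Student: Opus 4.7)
The plan is to build an explicit test function concentrated in a ball of radius $\sim 1/\gamma$ near a fixed point on $\partial B$, where the weight $|x|^{\gamma}$ remains bounded away from zero. Fix $e=(1,0,\ldots ,0)\in \partial B$ and a constant $\tau >0$. Once and for all, pick a nonnegative $\phi \in C_0^\infty(\mathbb{R}^N)$ with $\phi\not\equiv 0$ and $\mathrm{supp}\,\phi \subset \{y:|y|< \tau /2\}$. Set $x_\gamma := (1-\tau /\gamma )e$ and
$$
v_\gamma (x):= \phi\!\left(\gamma (x-x_\gamma )\right).
$$
For $\gamma $ large enough the triangle inequality gives $|x_\gamma + y/\gamma |\leq 1-\tau /(2\gamma )<1$ on $\mathrm{supp}\,\phi $, so $\mathrm{supp}\, v_\gamma \subset B$ and $v_\gamma \in W_0^{1,p}(B,|x|^\alpha ,|x|^\alpha )$.

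Apply the substitution $y=\gamma (x-x_\gamma )$, so that $dx=\gamma ^{-N}dy$ and $\nabla v_\gamma (x)=\gamma \nabla \phi (y)$. For $y\in \mathrm{supp}\,\phi$ a direct expansion yields
$$
|x_\gamma + y/\gamma |^2 = 1 - \frac{2(\tau - y\cdot e)}{\gamma} + O(\gamma ^{-2}),
$$
and hence $1-C/\gamma \leq |x_\gamma +y/\gamma |\leq 1$ with $C:=\tau +\tau /2$. Taking the $\gamma $-th power, and using $(1-C/\gamma )^\gamma \to e^{-C}$, there exist constants $0<c_1\leq c_2$, independent of $\gamma $, with
$$
c_1 \leq |x_\gamma +y/\gamma |^\gamma \leq c_2 \qquad \forall\, y\in \mathrm{supp}\,\phi.
$$
Similarly $|x_\gamma +y/\gamma |^\alpha$ is uniformly bounded above on $\mathrm{supp}\,\phi$. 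Therefore
$$
\int_B |\nabla v_\gamma |^p |x|^\alpha \, dx = \gamma ^{p-N}\int_{\mathbb{R}^N} |\nabla \phi (y)|^p |x_\gamma +y/\gamma |^\alpha \, dy \leq C_1 \gamma ^{p-N},
$$
$$
\int_B |v_\gamma |^q |x|^\gamma \, dx = \gamma ^{-N}\int_{\mathbb{R}^N} \phi (y)^q |x_\gamma +y/\gamma |^\gamma \, dy \geq c' \gamma ^{-N},
$$
with $C_1,c'>0$ independent of $\gamma $.

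Combining these bounds,
$$
\lambda _\gamma \leq R_\gamma (v_\gamma )\leq \frac{C_1 \gamma ^{p-N}}{\left( c' \gamma ^{-N}\right)^{p/q}} = C_0 \, \gamma ^{-N+p+Np/q},
$$
which is the claim. The only delicate point is the uniform two-sided bound on $|x_\gamma +y/\gamma |^\gamma $: it rests on observing that $\gamma \log |x_\gamma +y/\gamma |$ stays bounded as $\gamma \to \infty $, uniformly for $y$ in the compact set $\mathrm{supp}\,\phi $. Once this is checked, everything else reduces to a routine change of variables. The role of the parameter $\tau $ is simply to place $x_\gamma $ at the correct distance $\sim 1/\gamma $ from $\partial B$: closer, and $v_\gamma $ would leave $B$; farther, and the $|x|^\gamma $ factor would decay too fast to give the stated exponent.
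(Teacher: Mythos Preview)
Your argument is correct and follows essentially the same approach as the paper: both proofs construct a test function by scaling and translating a fixed profile into a ball of radius $\sim 1/\gamma$ centered at distance $\sim 1/\gamma$ from $\partial B$, then exploit that $|x|^{\gamma}$ is bounded below by a positive constant on that ball while $|x|^{\alpha}$ is bounded above. The only cosmetic differences are that the paper takes the first Dirichlet $p$-Laplace eigenfunction as the profile (you use a generic bump) and passes through H\"older's inequality to compare the $L^q$ and $L^p$ norms, whereas you estimate the $L^q$ integral directly; your route is marginally simpler.
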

\begin{proof}
Let $U\in W_0 ^{1,p} (B)$ be a positive first eigenfunction for the Dirichlet $p$-Laplacian in $B$, with eigenvalue $\underline{\lambda }$, that is
\begin{equation}
\label{eigenplaplace1}
\left\{ 
\begin{array}{l}
  -\Delta _p U \equiv -\nabla (|\nabla U|^{p-2} \nabla U) = \underline{\lambda } U^{p-1} \ \mbox{ in } \ B,
  \\
\\
 U=0 \ \mbox{ on } \ \partial B.
\end{array}
\right.
\end{equation}
We extend $U$ by zero outside $B$ and set 
$x^{\gamma } := (1-\gamma ^{-1} , 0, \ldots ,0)$ and $U_{\gamma } (x):= U(\gamma (x-x^{\gamma }))$. Then $U_{\gamma } \in W_0 ^{1,p} (B_{1/\gamma } (x^{\gamma }))$ and
\begin{equation}
\label{Ugamma1}
\int _{B} |\nabla U_{\gamma } |^p \, dx = \underline{\lambda } \gamma ^p \int
_{B} (U_{\gamma})^p \, dx .
\end{equation}   
It follows that 
\begin{eqnarray}
\label{ineq1}
\int_B  |\nabla U_{\gamma } |^p \, |x|^{\alpha } \, dx 
 & \leq & 
\int_B |\nabla U_{\gamma } |^p \, dx 
=
 \underline{\lambda }  \gamma ^p \int_B ( U_{\gamma } )^p \, dx.
\end{eqnarray}
On the other hand, we have by the minimality property of $\lambda _{\gamma} $ and in view of H\H{o}lder's inequality,
\begin{eqnarray}
\label{ineq2}
 \int_B |\nabla U_{\gamma }| ^p \, |x|^{\alpha}  \, dx 
 & \geq &  
\lambda _{\gamma } \left( \int _B  (U_{\gamma } )^q \, |x|^{\gamma } \, dx \right) ^{p/q} 
\\
\nonumber
 & = &  
\lambda _{\gamma } \left( \int _{B_{1/\gamma } (x^{\gamma }) }  
 (U_{\gamma } )^q \, |x|^{\gamma } \, dx \right) ^{p/q}
\\
\nonumber 
 & \geq &
\lambda _{\gamma } ( 1-2 \gamma ^{-1} ) ^{\gamma p/q} \cdot 
\left( 
\int _{B_{1/\gamma } (x^{\gamma })}   
(U_{\gamma } )^q \, dx 
\right) ^{p/q} 
\\
\nonumber 
 & \geq &
\lambda _{\gamma }  ( 1-2 \gamma ^{-1} ) ^{\gamma p/q} \cdot 
\left( 
\int _{B_{1/\gamma } (x^{\gamma } ) } dx \right) ^{(p/q)-1 } \cdot \int_B  (U_{\gamma } )^p \, dx  
\\
 \nonumber
 & = & \lambda _{\gamma }  ( 1-2 \gamma ^{-1} ) ^{\gamma p/q} \cdot \gamma ^{N-Np/q} \cdot (\omega _N ) ^{(p/q)-1} \cdot \int_B  (U_{\gamma } )^p \, dx,
\end{eqnarray} 
where $\omega _N $ denotes the Lebesgue measure of $B$. 
Now (\ref{ineq1}) and (\ref{ineq2}) yield
\begin{equation}\label{ineq3}
\lambda _{\gamma } \leq \underline{\lambda } \gamma ^{-N+p + Np/q}  (1-2\gamma ^{-1} ) ^{-\gamma p/q} (\omega _N) ^{1-p/q} \leq C_0 \gamma ^{-N +p + Np/q}, 
\end{equation}
where $C_0 $ does not depend on $\gamma$.
\end{proof}
\begin{lemma}\label{3.4}
There holds for all $\gamma \geq \alpha$,
\begin{equation}
\label{ineq4}
\lambda _{\gamma } ^{rad}\geq \left( \frac{\gamma + N}{\alpha +N} \right)^{p-1 + p/q} \cdot \lambda _{\alpha} ^{rad} .
\end{equation} 
\end{lemma}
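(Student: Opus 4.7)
The plan is to use the power change of variable $r = s^\tau$ in the one-dimensional form of the radial Rayleigh quotient, with $\tau := (\alpha+N)/(\gamma+N) \in (0,1]$ chosen so that the denominator weight $r^{\gamma+N-1}\,dr$ transforms into a constant multiple of $s^{\alpha+N-1}\,ds$. Given any admissible radial competitor $v(x) = V(|x|)$ for $\lambda_\gamma^{rad}$, I define $W(s) := V(s^\tau)$ and the associated radial function $\tilde w(x) := W(|x|)$. Because $s \mapsto s^\tau$ is a diffeomorphism of $(0,1]$ fixing the endpoints, $V(1)=0$ transfers to $W(1)=0$, and the weighted norms remain finite, so $\tilde w$ lies in the same admissible radial subspace of $W_0^{1,p}(B,|x|^\alpha,|x|^\alpha)$.

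A direct substitution gives
\begin{equation*}
\int_0^1 |V|^q r^{\gamma+N-1}\,dr = \tau \int_0^1 |W|^q s^{\alpha+N-1}\,ds,
\end{equation*}
\begin{equation*}
\int_0^1 |V'|^p r^{\alpha+N-1}\,dr = \tau^{1-p}\int_0^1 |W'(s)|^p s^{\widetilde e}\,ds, \qquad \widetilde e := (\alpha+N-1) + (\tau-1)(\alpha+N-p),
\end{equation*}
so that, after reassembling and pulling out powers of $\tau$,
\begin{equation*}
R_\gamma(v) = \tau^{1-p-p/q}\cdot \frac{N\omega_N\int_0^1 |W'|^p s^{\widetilde e}\,ds}{\bigl(N\omega_N\int_0^1 |W|^q s^{\alpha+N-1}\,ds\bigr)^{p/q}}.
\end{equation*}

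The heart of the argument is the sign of the exponent shift $\widetilde e - (\alpha+N-1) = (\tau-1)(\alpha+N-p)$. Since $\tau \le 1$ and, by the standing hypotheses $0 \le \alpha$ and $p \le N$ (from Theorem \ref{2.4} and Theorem \ref{3.1}), $\alpha+N-p \ge 0$, the product is $\le 0$, i.e.\ $\widetilde e \le \alpha+N-1$. Because $s \in (0,1]$, this gives the pointwise inequality $s^{\widetilde e} \ge s^{\alpha+N-1}$, whence
\begin{equation*}
\int_0^1 |W'|^p s^{\widetilde e}\,ds \ge \int_0^1 |W'|^p s^{\alpha+N-1}\,ds.
\end{equation*}
The right-hand side (together with the denominator, after restoring the factor $N\omega_N$) is precisely $R_\alpha(\tilde w)$, so $R_\gamma(v) \ge \tau^{1-p-p/q}\,R_\alpha(\tilde w) \ge \tau^{1-p-p/q}\,\lambda_\alpha^{rad}$. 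Taking the infimum over admissible radial $v$ and noting $\tau^{1-p-p/q} = ((\gamma+N)/(\alpha+N))^{p-1+p/q}$ yields \eqref{ineq4}.

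The only delicate point is indeed this sign: if one dropped $\alpha \ge 0$ or $p \le N$, the factor $\alpha+N-p$ could become negative and the pointwise comparison would reverse, giving the wrong direction of the inequality. The remaining technicalities --- justifying the change of variables for a general $V$ in the weighted Sobolev space, and verifying that $\tilde w$ sits in the admissible radial class --- are routine and can be handled by a density argument starting with smooth compactly supported radial functions and passing to the limit in both sides.
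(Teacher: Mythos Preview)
Your proof is correct and is essentially the same as the paper's: the substitution $r=s^{\tau}$ with $\tau=(\alpha+N)/(\gamma+N)$ is exactly the paper's change of variable $z=r^{(\gamma+N)/(\alpha+N)}$ written in inverse form, and the decisive step---using $(\tau-1)(\alpha+N-p)\le 0$ so that $s^{\widetilde e}\ge s^{\alpha+N-1}$ on $(0,1]$---matches the paper's observation that the exponent $\alpha+N-1-(\gamma-\alpha)(N+\alpha-p)/(\gamma+N)$ lies below $\alpha+N-1$. The only cosmetic difference is that the paper works directly with a minimizer for $\lambda_\gamma^{rad}$ while you work with an arbitrary radial competitor and take the infimum at the end.
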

\begin{proof}
Let $u\in W_0 ^{1,p} (B, |x|^{\alpha }, |x|^{\alpha })$ be a radial function, such that
\begin{equation}
\label{lambdaR1}
\lambda_{\gamma } ^{rad}= R_{\gamma } (u). 
\end{equation}
We write $u=u(r)$, where $r=|x|$. Setting $z:= r^{(\gamma +N)/(\alpha + N)}$ and $w(z):= u(r)$, and taking into account that $\alpha+N-p>0$ and $\gamma \geq \alpha $, we calculate
\begin{eqnarray}
\label{enum}
\int_B  |\nabla u|^p \,   |x|^{\alpha }  \,  dx 
 & = & N\omega _N \int_0 ^1 r^{\alpha +N-1} |u'(r)|^p \, dr 
\\
\nonumber
 & = & N\omega _N \left( \frac{\gamma +N}{\alpha + N} \right) ^{p-1} \int_0 ^1 z^{\alpha + N-1 - (\gamma - \alpha )(N+\alpha -p)/ (\gamma +N)}
  |w' (z)|^p \, dz
\\
\nonumber 
 & \geq  & N\omega _N  \left( \frac{\gamma +N}{\alpha + N} \right) ^{p-1} \int_0 ^1 z^{\alpha +N-1 } |w' (z)|^p \, dz ,
\end{eqnarray}
and
\begin{eqnarray}
\label{denom}
\int_B  |u|^q \,   |x|^{\gamma }    \, dx 
 & = &  N\omega _N \int_0 ^1 r^{\gamma +N-1} |u|^q \, dr 
  =  N\omega _N \cdot \frac{\alpha + N}{\gamma+N}  \int_0 ^1 z^{\alpha + N-1} |w|^q \, dz \, .
\end{eqnarray}
From (\ref{lambdaR1}), (\ref{enum}) and (\ref{denom}) we obtain
\begin{eqnarray*}
\lambda_{\gamma } ^{rad} 
 & \geq &  
\left( \frac{\gamma +N}{\alpha + N} \right) ^{p-1+ p/q} 
\frac{N\omega _N \int_0 ^1 z^{\alpha +N-1} |w'(z)|^p\, dz }{\left( N\omega _N \int_0 ^1 z^{\alpha + N-1} |w|^q \, dz \right) ^{p/q} } 
\\
\nonumber 
 & \geq & 
\left( \frac{\gamma +N}{\alpha +N} \right) ^{p-1+ p/q} \cdot
\lambda _{\alpha } ^{rad} . 
\end{eqnarray*}

\end{proof}
\noindent
{\sl Proof of Theorem \ref{3.1}. } 
One has from the inequalities
(\ref{lambdaineq2}) and (\ref{ineq4})
$$
\frac{\lambda_{\gamma}^{rad}}{\lambda_{\gamma}}\geq 
\frac{\lambda_{\alpha }^{rad} \cdot \left( \frac{\gamma +N}{\alpha + N} \right) ^{p-1+ p/q}}{C_0 \cdot \gamma ^{-N + p+ Np/q}} .
$$
Since $q>p$, we have that
$$
p-1 + \frac{p}{q} > -N + \frac{Np}{q} +p.
$$
It follows that
$$
\frac{\lambda_{\gamma}^{rad}}{\lambda_{\gamma}}\longrightarrow + \infty \ \mbox{ as } \ \gamma \to +\infty ,
$$ 
and (\ref{lambdaineq1}) follows if $\gamma $ is large enough.
$\hfill \Box $

\section{Foliated Schwarz symmetry of solutions to problem $({\bf P} ^M)$ }\label{FSS}
We are going to prove Theorem \ref{4.1}, about the minimizers of $({\bf P}^M )$, thus generalizing the case
$\alpha =0$ studied in \cite{Girao-Weth}. 

\smallskip
\noindent
{\sl Proof of Theorem \ref{4.1}. }  
We divide the proof into steps. We denote $B$  any ball centered in the origin, and for convenience we write $ \lambda ^M = \lambda $.
\\
\noindent {\sl Step 1:}
Let $H\in {\mathcal H}_0$, and let $u$ be a minimizer of $({\bf P}^M)$.
Then  by assuming that 
$
\Vert u\Vert _{q,B , \alpha } =1
$, then $u$ satisfies  the following Neumann boundary value problem for the Euler equation given by
\begin{equation}\label{euler1}
\left \{
\begin{array}{ll}
-\nabla \left( |x|^{\alpha}\nabla u\right) =   2\lambda   |x|^{\alpha} |u|^{q-2} u +\mu |x|^{\alpha}&\quad \hbox{in } B
\\
 &\\
 \dfrac{\partial u}{\partial \nu }=0 & \quad \hbox{on } \partial B
 \end{array}
 \right.
 ,
\end{equation}
for some $\mu \in \R$, where $\nu $ denotes the exterior unit normal.\\ 
By the assumption on $q$ and classical  regularity theory,  
we deduce that 
$u$ is bounded in $B\setminus B_{\epsilon} $ for every $\epsilon >0$, and then 
$u\in C^2 (\overline{B} \setminus \{ 0\} )$.
\\
\noindent 
On the other hand, the following equalities hold by Lemma 2.6:
 $$
u_H\ne 0, \quad u_H\in W ^{1,2} (B, |x|^\alpha, |x|^{\alpha }) ,\quad 
\int_\Omega  u_H  \, |x|^{\alpha} \, dx=0 , \quad \|u_H\|_{q,B,\alpha}=1, 
$$
$$
\int_B |u|^q \,  |x|^\alpha   \, dx
=
\int_B |u_H|^q    \,   |x|^\alpha   \,  dx\,,\quad
\int_B  u \,  |x|^\alpha  \,  dx =
\int_B  u_H  \,  |x|^\alpha  \,  dx \,, 
$$
$$
\int_B  |\nabla u|^2 \, |x|^\alpha \, dx
=
\int_B  |\nabla u_H|^2   \,  |x|^\alpha  \,  dx\,.
$$
and  therefore we get
$$
R_{2,q,\alpha, \alpha }(u)=R_{2,q,\alpha, \alpha}(u_H)\,.
$$
Hence, $u_H $ is a minimizer, too, so that it satisfies 
the same Euler equation satisfied by $u$ and boundary Neumann condition , i.e.
\begin{equation}\label{euler2}
\left \{
\begin{array}{ll}
 -\nabla \left( |x|^{\alpha}\nabla u_H\right) =  2 \lambda |x|^{\alpha} |u_H|^{q-2} u_H +\mu |x|^{\alpha}&\quad \hbox{in } B
\\ &\\
\dfrac{\partial u_H}{\partial \nu}=0 & \quad \hbox{on } \partial B\,.
 \end{array}
 \right.
\end{equation}

Moreover $u_H$ satisfies the same regularity properties of $u$, that is $u_H\in C^2 (\overline{B} \setminus \{ 0\} )$.
\\
\noindent 
{\sl Step 2:}
Define 
$v:= u-u_H $ and note that $v\geq 0 $ in $B\cap H$.
Then $v\in C^2 (\overline{B} \setminus \{ 0\} )$ satisfies the following linear elliptic equation
\begin{equation}\label{difference}
- \nabla (|x|^{\alpha } \nabla v ) =2  \lambda |x|^{\alpha } m(x) v\,, \quad \hbox{in } B\setminus \{ 0\}  
\end{equation}
where 
$$
m(x) := \left\{
\begin{array}{ll}
\dfrac{ 
 |u|^{q-2} u- |u_H|^{q-2} u_H  }{v(x)} 
 & \quad \mbox{if $v(x)\ne 0$},
 \\&\\
0 & \quad \mbox{if $v(x)=0$}
\end{array}
\right.   
$$ 
Since $u,u_H\in C^2 (\overline{B} \setminus \{ 0\} )$, $m(x)$ is a bounded function in $B\setminus B_{\epsilon} $ for every $\epsilon >0$.
\\
We claim that for every half-space $H$ with $0\in \partial H$ there holds one of the following:
\begin{enumerate}
\item
$\sigma _H u \equiv u_H$ on $H$,
\item 
$u\equiv u_H$  on $H$.
\end{enumerate}
If (1) holds, we are done. 
Note that (1) implies that $u(x) \leq \sigma_H u(x)$ on $H$.
Hence, if (1) does not hold, then there is a point $x_0 \in H$ with
$u(x_0 ) > \sigma _H u (x_0 )$. 
Since $u$ is continuous, there is a neighborhood $W$ of $x_0 $ with $W\subset H$, such that
$u(x) > \sigma _H u(x )$ on $W$, which also implies 
$u(x)\equiv u_H (x) $ in $W$, that is, $v\equiv 0$  in $W$. 
We may apply
the Principle of Unique Continuation to (\ref{difference})
to conclude that $v\equiv 0$, that is,
$u\equiv u_H $ throughout $H$. 
In other words, (2) holds.
This proves the claim. 
\\
Finally  by Theorem \ref{lemma_FSS} 
 this implies that $u$ is - up to a rotation about the origin - foliated Schwarz symmetric with respect to some point $P\in {\mathbb S}^{N-1} $.
$\hfill \Box $
\begin{remark}
The above result holds in the case of an annulus centered at the origin, too.
\end{remark}

\begin{remark}\label{trasf}
We explicitly observe that, by using the transform $\rho=r^\beta$, with $\beta=1+\alpha(N-p)$, we could reduce problems $({\bf P} ^D)$ and $({\bf P} ^M)$ to the classical ones with $\alpha=0$ and  Theorems \ref{3.1}, \ref{4.1} above could be obtained as in the classical case. Anyway we have given a different  proof of the two results  adapted to the weighted problems.
\end{remark}

\section{Shape of solutions to problem $({\bf P} ^M )$ for $q=2$ and $N=2$}\label{caseq=2}
\noindent
In this section we show the explicit expression of the  solutions to problem $({\bf P}^M)$ in the case $q=2$ and $N=2$. This will be useful to prove symmetry properties of the minimizers.
\\
First we recall some properties of Bessel functions  (see, for example, \cite{AS}).
\subsection*{5.1. A few properties of Bessel functions}
\noindent
\\[0.1cm]
It is well-known that   Bessel functions $J_\nu$, $Y_\nu$ of order $\nu$ of the first and second kind, are linearly independent for any value of $\nu$ (see, for example, \cite{AS} p. 358). 
The following relation holds true 
\begin{equation}\label{jy}
Y_\nu(r)=\frac{J_\nu(r)\cos(\nu \pi)-J_{-\nu}(r)}{\sin(\nu \pi)},
\end{equation}
for non integer $\alpha$
and where the right-hand side is replaced by its limiting value whenever $\nu$ is an integer. 
Moreover, $J_\nu$ satisfies the following fundamental recurrence relation
\begin{equation}\label{consrecbess}
rJ_\nu'(r)-\nu J_{\nu}(r)=-rJ_{\nu+1}(r),  \qquad r \in \R.
\end{equation}
If we denote by $j_{\nu,h}, j_{\nu,h}'$ the zeros of $J_\nu, J_\nu'$, respectively, then
$$
\nu\le j'_{\nu,1}<j_{\nu,1}<j'_{\nu,2}<....
$$
and
\begin{equation}\label{zeros}
 j_{\nu,1}<j_{\nu+1,1}<j_{\nu,2}<....
\end{equation}
In  \cite{AS} (Prop. 9.1.9, p. 360), the following identities can be found
\begin{equation}
\label{jnu}
\begin{aligned}
J_\nu(r)&=\frac{\left(\frac 1 2 r\right)^\nu}{\Gamma(\nu+1)} \prod_{h=1}^\infty \left(1-\frac{r^2}{j_{\nu,h}^2}\right), \quad \nu \ge 0
\\
J_\nu'(r)&=\frac{\left(\frac 1 2 r\right)^{\nu-1}}{2\Gamma(\nu)} \prod_{h=1}^\infty \left(1-\frac{r^2}{(j_{\nu,h}')^2}\right), \quad \nu > 0.
\end{aligned}
\end{equation}
Finally, we will deal with the equation 
\begin{equation} \label{eq1bis}
-\frac{\alpha}{2} 
J_{\nu}(x)
+
x J'_{\nu}(x)=0\,\,\,\,\,\,x\geq 0\,.
\end{equation}
The roots of this equation have been studied in \cite{Landau}.
We rewrite it as
\begin{equation} \label{eq2bisF}
F_{\nu}(x)=\frac{\alpha}{2} \,,
\end{equation}
where 
\begin{equation} \label{fnu}
F_{\nu}(x)
=
x\frac{J'_{\nu}(x)}{J_{\nu}(x)}
=
\nu-x\frac{J_{\nu+1}(x)}{J_{\nu}(x)}
=
-\nu+x\frac{J_{\nu-1}(x)}{J_{\nu}(x)}
\,,
\end{equation}
for any positive $x$ which is not a zero for $J_{\nu}$.
Here we used the property
\begin{equation}\label{proprietaBessel}
zJ_{\nu}'(z)=\nu J_{\nu}(z) - z J_{\nu+1}(z)\,.
\end{equation}
We emphasize that the positive zeros of $J_{\nu}(x)$ are not solutions of equation \eqref{eq2bisF}. 
\\
\noindent It is proved in \cite{Landau} that, for any $\nu>-1$, the function $F_{\nu}(x)$ decreases from the value $\nu$ at $x=0$ to $-\infty$ at $x=j_{\nu,1}$, the first positive zero of the function $J_{\nu}(x) $, jumping to $+\infty$ as $x$ moves past $j_{\nu,1}$ and decreases to $-\infty$ at $x=j_{\nu,2}$ and so on (see Figures 4 in \cite{Landau}).
\\
\noindent Let $x_{\nu, k}$, $k=1,2,...$, be the positive roots of the equation
\begin{equation*} \label{eq1bis}
-\frac{\alpha}{2} 
J_\nu(x)
+
x J'_\nu(x)=0\,,
\end{equation*}
ordered in increasing order.
In  \cite{Landau}  the behaviour of  $x_{\nu, k}$ is described as the order $\nu$ varies over the entire range of all real values. In particular, at page 196, it is proved that
$$
\frac d{d\nu} x_{\nu, k}>0
\,\, \, 
\hbox{whenever }
\,\, 
F'_{\nu}(x_{\nu, k})<0\,. 
$$
\subsection*{5.2. Explicit expression of the eigenfunctions in dimension 2, for $q=2$}
\noindent
\\[0.1cm]
For convenience we again write $\lambda := \lambda ^M $ for the infimum in problem $({\bf P} ^M)$. The main result of this section is the
\\
\noindent
{\sl Proof of Theorem \ref{5.1}.} 
Let  $u$ be a minimizer to problem $({\bf P} ^M)$. Then
$u$ solves  the  Neumann boundary value problem for the Euler equation given by
\eqref{euler1}. 
 It is easy to see  that in this case $\mu=0$. Indeed, one can use $u$ as test function in the Euler equation and integrate on $D$. The constraint on the weighted average of $u$  on the right-hand side gives the conclusion.
\\
By using polar coordinates and standard separation of variables, we can write $u$ as
$$u(x_1,x_2)=v(r,\theta)=\varphi(r)w(\theta)\,, \qquad 0\le r\le 1\,, \quad 0\le \theta\le 2\pi\,.
$$
where
 $\varphi (r)$ and $w(\theta)$ are solutions to the following problems respectively: 
\begin{eqnarray}
&&
\label{fi1}
\left\{
\begin{array}{l}
r^2\varphi''(r)+(\alpha+1)r\varphi'(r)+(2\lambda r^2+C)\varphi(r)=0\,,\ \ 0<r\le 1\,,
\\
 \varphi'(1)=0 
 \end{array}
 \right.
 ,
\\
\label{w1}
&&
\left\{
\begin{array}{l}
w''(\theta)-C w(\theta)=0\,,\ \ 0<\theta\le 2\pi\,,
\\
 w(0)=w(2\pi) 
\end{array}
\right.
.
\end{eqnarray}
That is 
$$
w_n(\theta)=\left\{
\begin{array}{ll}
A_0, & n=0
\\
A_n \cos(n\theta)+B_n\sin(n\theta), &n\ge1
\end{array} 
\right.
$$
for any constant $A_0$, $A_n$, $B_n \in \R$ and
\\
$$
\varphi_n(r)= r^{-\alpha/2}[c_1J_{\nu_\I}(\sqrt{2\lambda } r)+c_2Y_{\nu_\I}(\sqrt{2\lambda } r)],
$$
where $c_1$, $c_2$ are arbitrary constants and  $J_{\nu_\I}(r)$, $Y_{\nu_\I}(r)$, with $\nu_\I={\sqrt{n^2+\frac{\alpha^2}{4}}}$, are Bessel functions of first and second kind respectively. 
\\
Since the solution $u$ must belong to the weighted space $L^{2} (B, |x|^\alpha)$, necessarily $c_2=0$. Indeed by \eqref{jy} and \eqref{jnu}, for any fixed $\nu > 0 $ and 
$r\rightarrow 0^+$, it holds that
$$
J_{\nu}(r) \sim c_\nu  r^{\nu}\, \hbox{ and } \, 
Y_{\nu}(r) \sim c_\nu r^{-\nu}\,.
$$
Therefore the integral of $u_n=\varphi_n(r)w_n(\theta)$, that is,
$$
\int_B |u_n|^2 |x|^\alpha\, dx=\int_0^{2\pi}w_n^2(\theta)\,d\theta\int_0^1\varphi_n^2(r)r^{\alpha+1}\,dr\,,
$$
is finite if, and only if,
$$
-\alpha-2\nu_\I +\alpha+1>-1\,,
$$
that is,
$$
\nu_\I <1\, .
$$
But such a condition is not verified if $n\ge 1$. This justifies the choice of $c_2=0$ for $n\ge 1$.
\\
For $n=0$, condition $\nu_\I <1$ is equivalent to $|\alpha|<2$. Moreover , since 
\begin{equation}
\label{A_JY}
J_{\nu }(r)\sim r^{\nu },\text{\ \ }Y_{\nu }(r)\sim r^{-\nu }\text{, \ }
\frac{d}{dr}J_{\nu }(r)\sim r^{\nu -1},\text{ \ }\frac{d}{dr}Y_{\nu }(r)\sim
r^{-\nu -1},
\end{equation}
an analogous argument shows that
$$
\int_B |\nabla u_0|^2 |x|^\alpha\, dx=A_0\int_0^1|  \varphi_0 ^{\prime} (r)|^2r^{\alpha+1}\,dr\,
$$
is finite if, and only if
$
-\alpha-2-2\nu_0 +\alpha+1>-1\,.
$
But such a condition is not verified. This justifies the choice of $c_2=0$ also for $n=0$.
\\
We now impose the Neumann condition $\varphi'_n(1)=0$ in the expression
$$
\varphi_n(r)= c_1r^{-\alpha/2}J_{\nu_\I}(\sqrt{2\lambda } r)\,.
$$ 
\noindent 
An easy calculation gives, for any $0<r<1$:
$$
\varphi'_n(r)=
-c_1\frac{\alpha}{2} r^{-\frac\alpha2-1}
J_{\nu_\I}(\sqrt{2\lambda } r)
+
c_1 r^{-\frac\alpha2}\sqrt{2\lambda } J'_{\nu_\I}(\sqrt{2\lambda} r)\,.
$$
Therefore the Neumann condition $\varphi'_n(1)=0$ is equivalent to 
\begin{equation}\label{bdN}
-\frac{\alpha}{2} 
J_{\nu_\I}(\sqrt{2\lambda })
+
\sqrt{2\lambda } J'_{\nu_\I}(\sqrt{2\lambda })=0\,.
\end{equation}
This means that $\sqrt{2\lambda } $ is a positive root of the equation
\begin{equation} \label{eq1}
-\frac{\alpha}{2} 
J_{\nu_\I}(x)
+
x J'_{\nu_\I}(x)=0\,
\end{equation}
or equivalently
\begin{equation} \label{eq2bis}
F_{\nu_\I}(x)=\frac{\alpha}{2} \,,
\end{equation}
according to \eqref{eq2bisF}.
Let us consider now for any fixed $n\in \N\cup\{0\}$ the positive roots $x_{\nu_\I, k}$, $k=1,2...$ of the equation \eqref{eq1}. For $n=0$ the smaller positive root is
$$x_{\nu_\Io, 1}, \quad  \hbox{with } \quad \nu_\Io=\frac{|\alpha|}2\,.
$$
For the value $\nu_\Io=\frac{|\alpha|}2$ and definition \eqref{fnu} of function $F_{\nu_\I}(x)$,  equation \eqref{eq2bis} becomes
\begin{equation*} 
\frac{|\alpha|}2-x\frac{J_{\nu_\Io+1}(x)}{J_{\nu_\Io}(x)}=\frac{\alpha}{2} \,, \quad \hbox{if } \alpha>0\,,
\end{equation*}
or
\begin{equation*}  
-\frac{|\alpha|}2+x\frac{J_{\nu_\Io-1}(x)}{J_{\nu_\Io}(x)}=\frac{\alpha}{2} \,, \quad \hbox{if } \alpha<0\,.
\end{equation*}
This implies that the positive root $x_{\nu_\Io, 1}$ of equation \eqref{eq2bis} coincides with the zero $j_{\nu_{\Io+1},1}$ of  the Bessel function $J_{\nu_\Io+1}$, when $\alpha>0$ and coincides with the zero $j_{\nu_{\Io-1},1}$ of  the Bessel function $J_{\nu_\Io-1}$, when $\alpha<0$. 
\\
\noindent Assume $\alpha>0$. By previous described properties of $x_{\nu, k}$, we know that
$$
x_{\nu_1, 1}<j_{\nu_1,1}
$$
and by properties of zero's Bessel functions, since $\nu_1=\sqrt{1+\frac {\alpha^2}4}<\nu_\Io+1=\frac{|\alpha|}2+1$,  it results
$$
j_{\nu_1,1}<j_{{\nu_\Io+1},1}\equiv x_{\nu_\Io, 1}\,.
$$
\\
\noindent Assume $\alpha<0$. In such a way $x_{\nu_\Io, 1}=j_{\nu_{\Io-1},1}$ (with $\nu_{\Io}-1>-1$) is the smallest positive root of equation \eqref{eq2bis} and therefore $\sqrt{2\lambda } = j_{\nu_{\Io-1},1}$. But such a root  cannot be considered. Indeed in this case we choose  $n=0$. Moreover the minimizer $u(x_1,x_2)=A_0\varphi_0(r)= A_0r^{-\alpha/2}J_{\nu_\Io}(j_{\nu_{\Io-1},1} r)$ must have zero weighted mean value, while by the following equality 
(see \cite{GR}, p.707 n.6.556 (9)) 
we get
$$
\int_0^1r^{1-\nu_\Io}J_{\nu_\Io}(j_{\nu_{\Io-1},1} r)\,dr=\frac{(j_{\nu_{\Io-1},1})^{\nu_{\Io-2}}} 
{2^{\nu_{\Io-1}}\Gamma(\nu_\Io)}-(j_{\nu_{\Io-1},1})^{-1}
J_{\nu_\Io-1}(j_{\nu_{\Io-1},1} )=\frac{(j_{\nu_{\Io-1},1})^{\nu_{\Io-2}}} 
{2^{\nu_{\Io-1}}\Gamma(\nu_\Io)}\ne 0\,.
$$
\\
We conclude that  in both cases the smaller positive root of equation \eqref{eq2bis} is given by $x_{\nu_1, 1}$. This implies that 
\begin{equation}\label{xnu11}
\sqrt{2\lambda}=x_{\nu_1,1}\,, \quad \nu_1=\sqrt{1+\frac {\alpha^2}4}
\end{equation}
and $\varphi_1(r)$ is the corresponding solution to problem \eqref{fi}. 
\\
\noindent Finally the uniqueness (up to rotations and multiples)
of the function $u(x_1,x_2)=v(r, \theta)$ is a consequence of standard properties of completeness. $\hfill \Box $
\\
\section{Break of anti-symmetry of solutions to problem $({\bf P} ^M)$ for $N=2$ and large $q$}
\label{caseqlarge}
\noindent
\hspace*{0.5cm}In this section we give conditions in the two-dimensional case, such that the minimizers of problem $({\bf P} ^M)$ fail to be antisymmetric.  
\\
We recall that 
the foliated Schwarz symmetry proved in 
 Section 4 implies that, up to a rotation about the origin, a minimizer $u(x_1,x_2)$  is symmetric (even) in the variable $x_1$, for any $q\geq 2$. We are now going to analyse the behaviour of $u$  with respect to the other variable,
$x_2$. Note that, for $q=2$, 
formula
 (\ref{expressionofu}) 
implies that, if $u$ is even in the variable $x_1$, then $u$ is antisymmetric (odd) w.r.t. $x_2$.
\\
Readapting a technique of \cite{Girao-Weth}, we  prove in this section that, for $-2<\alpha <0$ and sufficiently large $q$, if $u$ is symmetric w.r.t. the variable $x_1 $, then $u$ is {\sl not} antisymmetric with respect to $x_2$. 
\\
\hspace*{0.5cm}
In the sequel let $B\subset \mathbb{R}^2$  denote the ball of radius 1 centered at the origin, and let $\lambda _{\alpha , q} := \lambda ^M$ be the corresponding infimum in problem $({\bf P} ^M)$.

The key point  in the proof of Theorem \ref{thmbreaking}  is a result by Ren and Wei (see Lemma 2.2 in \cite{Ren-Wei}), where it is shown that if one considers the  Rayleigh quotient $R_{0,q}$ in the space of $W^{1,2}_0(B)$ functions, 
the corresponding eigenvalue tends to $0$ as the parameter $q$ of the denominator goes to infinity. We  prove that the same  behaviour holds  for our eigenvalue $\lambda _{\alpha,q}$. 
\begin{lemma}\label{firstlimit}
Let  $\Omega $ be a bounded domain in $\R ^N $ containing $0$ and $-2<\alpha <0$. Further, let  
$$
\lambda^0_{\alpha,q}(\Omega):=\inf \left\{\frac{ \displaystyle \int_\Omega  |\nabla v|^2   |x|^{\alpha}\, dx }{ \displaystyle \left( \int_{ \Omega } |v|^q  |x|^{\alpha} \, dx \right) ^{2/q} } :\, v\in W_0^{1,2} (\Omega, |x|^\alpha, |x|^{\alpha} ) \setminus \{ 0\} \right\}, \quad q\geq 2 .
$$
Then 
$\lambda^0_{\alpha,q}(\Omega)\to 0$
as
$q\to \infty$.
\end{lemma}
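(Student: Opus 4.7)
The plan is to construct a test function whose Rayleigh quotient tends to zero as $q\to\infty$. Since $-2<\alpha<0$ makes the weight $|x|^\alpha$ singular at the origin, test functions localized near $0$ would have blowing-up weighted Dirichlet energy and cannot be useful here. My strategy is therefore to localize away from the origin: fix a point $x_0\in\Omega\setminus\{0\}$ and a radius $R>0$ with $\overline{B_R(x_0)}\subset\Omega\setminus\{0\}$. On such a ball the weight $|x|^\alpha$ is bounded between two positive constants $c_1\le c_2$, so the weighted and unweighted $L^p$ norms are equivalent, and it suffices to find a sequence $v_q\in W_0^{1,2}(B_R(x_0))$ with
\[
\frac{\|\nabla v_q\|_{L^2(B_R(x_0))}^2}{\|v_q\|_{L^q(B_R(x_0))}^2}\longrightarrow 0 \quad \text{as }q\to\infty,
\]
which is the two-dimensional phenomenon exploited in \cite{Ren-Wei}.

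To produce such a sequence I will use Moser's test functions. For each integer $n\ge 2$, define $M_n\in W_0^{1,2}(B_1)$ by
\[
M_n(y)=\frac{1}{\sqrt{2\pi}}
\begin{cases}
\sqrt{\log n} & \text{if } |y|\le 1/n,\\
\log(1/|y|)/\sqrt{\log n} & \text{if } 1/n<|y|<1,\\
0 & \text{if } |y|\ge 1.
\end{cases}
\]
A direct computation in polar coordinates gives the normalization $\int_{B_1}|\nabla M_n|^2\,dy=1$. Setting $v_n(x):=M_n((x-x_0)/R)$ and extending by zero, one obtains $v_n\in W_0^{1,2}(\Omega,|x|^\alpha,|x|^\alpha)$, and a change of variables combined with the upper weight bound gives
\[
\int_\Omega |\nabla v_n|^2|x|^\alpha\,dx\le c_2\int_{B_R(x_0)}|\nabla v_n|^2\,dx=c_2,
\]
uniformly in $n$. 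For the denominator, since $v_n\equiv \sqrt{\log n/(2\pi)}$ on $B_{R/n}(x_0)$, the lower weight bound yields
\[
\int_\Omega |v_n|^q|x|^\alpha\,dx\ge c_1\left(\frac{\log n}{2\pi}\right)^{q/2}|B_{R/n}(x_0)|=c_1\pi R^2 n^{-2}\left(\frac{\log n}{2\pi}\right)^{q/2}.
\]

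The last step is to couple $n$ to $q$. Taking $n=q$, the prefactor $(c_1\pi R^2 q^{-2})^{2/q}$ tends to $1$ as $q\to\infty$, while $\log q/(2\pi)\to\infty$. Plugging $v_q$ into the infimum defining $\lambda^0_{\alpha,q}(\Omega)$ then gives
\[
\lambda^0_{\alpha,q}(\Omega)\le \frac{c_2}{(c_1\pi R^2 q^{-2})^{2/q}\cdot\log q/(2\pi)}\longrightarrow 0,
\]
completing the argument. The only real difficulty is the change of viewpoint: one must resist concentrating test functions at the origin, where the weight formally favors them, because for $\alpha<0$ their weighted Dirichlet energy diverges. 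Once the support is moved into a compact subset of $\Omega\setminus\{0\}$, the weight becomes harmless and the problem reduces to the classical two-dimensional fact, delivered by Moser's sequence, that the unweighted Rayleigh quotient $\|\nabla v\|_2^2/\|v\|_q^2$ admits a null sequence as $q\to\infty$.
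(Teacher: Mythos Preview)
Your proof is correct and follows essentially the same approach as the paper: both localize in a ball $B_R(x_0)\subset\Omega\setminus\{0\}$ where the weight is bounded above and below, translate a logarithmic test function there, and reduce to the unweighted two-dimensional fact that $\|\nabla v\|_2^2/\|v\|_q^2$ can be driven to zero. The paper uses the Ren--Wei functions $w_q$ (the log with a plateau of height $q$ on $B_{Re^{-q}}$) and cites \cite{Ren-Wei} for the unweighted limit, whereas you carry out the equivalent computation explicitly with Moser's sequence $M_n$ and the coupling $n=q$; the two constructions differ only in normalization and parameterization.
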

\begin{proof}
Choose $R>0$ and $x_0 \in \Omega $ such that $B_{2R} (x_0 )\subset \Omega $ and $0\not\in B_{2R} (x_0 )$. For $q\geq 1$ we define $w_q :\R ^2 \to \R $ by 
$$
w_q(x)=\left\{
\begin{array}{ll}
q  & | \ \ 0\leq |x|\leq R e^{-q}
\\[0.1cm]
 \ln \frac{R}{|x|} 
   & | \ \ R e^{-q}\leq |x|\leq R
\\[0.1cm]
0 & |\ \ |x|\geq R\,.
\end{array}
\right.
$$
It has been shown in \cite{Ren-Wei}, Lemma 2.2, that 
\begin{equation}
\label{limvq}
\lim_{q\to + \infty} \frac{\displaystyle\int_{B_R (0)} |\nabla w_q |^2 \, dx }{\left( \displaystyle\int_{B_R (0) } 
|w_q |^q \, dx \right) ^{2/q} } =0.
\end{equation}    
Now let $u_q\in W _0 (\Omega , |x|^{\alpha }, |x|^{\alpha})$ be defined by
$$
u_q (x) := w_q (x-x_0), \quad x\in \Omega .
$$ 
In view of our assumptions there are positive constants $C_1 ,C_2 $ such that
$$
C_1 \leq |x|^{\alpha } \leq C_2\,, \quad \forall x\in B_R (x_0 ) .
$$
Together with (\ref{limvq}) we finally obtain
\begin{eqnarray*}
\lambda ^0 _{\alpha , q}
 &\leq &
\frac{
\displaystyle\int_{B_R (x_0 )}  |\nabla u_q |^2 \, |x|^{\alpha } \, dx }{
\left( 
\displaystyle\int_{B_R (x_0) } 
|u_q | ^q \,  |x|^{\alpha }   \,   dx \right) ^{2/q}
} 
\\
 & \leq & 
 C_2 
 \cdot 
 \left( C_1 \right) ^{-2/q} 
 \cdot 
 \frac{\displaystyle\int_{B_R (0)} 
 |\nabla w_q |^2 \, dx }{
 \left( 
 \displaystyle\int_{B_R (0) } 
|w_q |^q \, dx 
\right) ^{2/q} 
}  \longrightarrow 0 \ \mbox{ as }\ q\to +\infty .
\end{eqnarray*}   
\end{proof}
A direct consequence of the above lemma is the following  result
\begin{corollary}
\label{secondlimit}
Let 
$-2<\alpha<0$ and 
$$
\lambda^{as}_{\alpha,q}(B):=\inf \left\{ R_{\alpha,q}(v): \  v\in W_0^{1,2} (B, |x|^\alpha, |x|^{\alpha }) \setminus \{ 0\}, v(x_1,-x_2)=-v(x_1,x_2) \right\}\,.
$$
Then $\lambda^{as}_{\alpha,q}(B)\to 0$, as $q\to \infty$.
\end{corollary}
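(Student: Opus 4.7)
\medskip
\noindent\textbf{Proof proposal.} The plan is to deduce the corollary from Lemma~\ref{firstlimit} by a reflection-doubling argument that turns a near-optimal test function for $\lambda^0_{\alpha,q}$ into an admissible antisymmetric one with essentially the same Rayleigh quotient.

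First, I would fix a point $x_0 = (0, a)$ with $0 < a < 1$ and a radius $R \in (0, a/2)$ small enough that the ball $B_R(x_0)$ lies strictly inside the open upper half-disk $B \cap \{x_2 > 0\}$ and, in particular, stays bounded away from the origin. Applying Lemma~\ref{firstlimit} with $\Omega = B_R(x_0)$ (equivalently, using directly the translated cutoff functions $u_q(x) := w_q(x - x_0)$ appearing in its proof), I obtain a sequence $u_q \in W_0^{1,2}(B_R(x_0), |x|^{\alpha}, |x|^{\alpha})$, extended by zero to all of $B$, whose Rayleigh quotient tends to $0$ as $q \to \infty$.

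Next, I antisymmetrize by setting
\[
\tilde{u}_q(x_1, x_2) := u_q(x_1, x_2) - u_q(x_1, -x_2), \qquad (x_1, x_2) \in B.
\]
By construction $\tilde{u}_q$ is odd in $x_2$, belongs to $W_0^{1,2}(B, |x|^{\alpha}, |x|^{\alpha})$, and the supports of its two summands are the disjoint balls $B_R(x_0)$ and $B_R(-x_0)$. Since the weight $|x|^{\alpha}$ is invariant under the reflection $x_2 \mapsto -x_2$, a change of variables on the lower-half contribution gives
\[
\int_B |\nabla \tilde{u}_q|^2 |x|^{\alpha}\, dx = 2 \int_{B_R(x_0)} |\nabla u_q|^2 |x|^{\alpha}\, dx,
\]
\[
\int_B |\tilde{u}_q|^q |x|^{\alpha}\, dx = 2 \int_{B_R(x_0)} |u_q|^q |x|^{\alpha}\, dx,
\]
so that $R_{\alpha,q}(\tilde{u}_q) = 2^{1 - 2/q} \cdot R_{\alpha,q}(u_q)$. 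Since $\tilde{u}_q$ is admissible for $\lambda^{as}_{\alpha,q}(B)$, taking $q \to \infty$ and invoking Lemma~\ref{firstlimit} gives the claim.

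There is no genuine obstacle here, which is presumably why the result is advertised as a direct consequence of the preceding lemma. The only point requiring care is the geometric placement of $x_0$ and $R$: both $B_R(x_0)$ and its reflection $B_R(-x_0)$ must sit inside $B$, be disjoint, and stay uniformly away from the origin (where $|x|^{\alpha}$ is singular for $\alpha<0$). Any interior point on the $x_2$-axis at distance $\geq 2R$ from the $x_1$-axis and from $\partial B$ works.
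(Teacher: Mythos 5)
Your proof is correct and follows essentially the same route as the paper: reduce to Lemma~\ref{firstlimit} and odd-reflect across $\{x_2=0\}$ to produce an antisymmetric test function for $\lambda^{as}_{\alpha,q}(B)$. The only cosmetic differences are that you reflect from a small ball $B_R(x_0)$ rather than the whole half-disk $B^+$, and your prefactor $2^{1-2/q}$ is the correct one (the paper's displayed $2^{1-q/2}$ appears to be a typographical slip, though harmless since either factor stays bounded as $q\to\infty$).
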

\begin{proof}
Let $u$ be a function realizing $\lambda^0_{\alpha,q}(B^+)$, where $B^+$ is the upper half part of the unit ball in the plane.
We  define
$$
w(x_1,x_2)=\left\{
\begin{array}{ll}
u ( x_1 , x_2 ) & | \ \  (x_1,x_2)\in B^+
\\
-u(x_1,-x_2)& | \ \  (x_1,x_2)\in B\setminus B^+
\end{array}
\right.
$$
and use it as a test function.
By Lemma \ref{firstlimit} this gives
$$
\lambda^{as}_{\alpha,q}(B)
\leq 
\frac{\displaystyle \int_B 
|\nabla w|^2   |x|^{\alpha}\, dx }{\displaystyle \left( \int_{ B } |w|^q  |x|^{\alpha} \, dx \right)
^{2/q} }=2^{1-\frac q2} \lambda^0_{\alpha,q}(B^+)\longrightarrow 0 \ \mbox{ as }\  q\to +\infty .
$$
\end{proof}
\medskip
Now we prove the main result of this section, Theorem \ref{thmbreaking}.
\\[0.1cm]
{\sl Proof of Theorem \ref{thmbreaking}.}
We define a particular test function $\widetilde{u}_q$ for $\lambda_{\alpha,q}(B)$
to prove that
$
\lambda_{\alpha,q}(B)<\lambda^{as}_{\alpha,q}(B)
$.
Let $v_q$ be a function such that $v_q(x_1,-x_2)=-v_q(x_1,x_2)$ realizing $\lambda^{as}_{\alpha,q}(B)$, such that $\displaystyle \int_B |\nabla v_q|^2  |x|^\alpha\, dx =1$.
We define
$$
\overline{u}_q(x_1,x_2)=\left\{
\begin{array}{ll}
v_q, & (x_1,x_2)\in B^+
\\
0&  (x_1,x_2)\in B\setminus B^+\,.
\end{array}
\right.
$$
We observe that
\begin{equation}
\label{normegradient}
\int_B |\nabla \overline{u}_q|^2 |x|^\alpha\, dx=\frac{1}{2}\, ,
\end{equation}
and
\begin{equation}
\label{normaLqoverlineu}
\int_B |\overline{u}_q|^q |x|^\alpha\, dx=\frac{1}{2} \int_B |{v}_q|^q  |x|^\alpha\, dx= \frac{1}{2} [\lambda^{as}_{\alpha,q}(B)]^{-q/2}\,.
\end{equation}
We now use  
$$
\widetilde{u}_q  :=  \overline{u}_q - d , \quad \mbox{where  }\ d := \frac{1}{\int_B |x|^{\alpha} \, dx } \int_B \overline{u}_q  |x|^\alpha\, dx,
$$
as a  test function for $\lambda_{\alpha,q}(B)$.
We have, by (\ref{normegradient}),
$$
\lambda_{\alpha,q}(B)
 \leq  
\frac{\displaystyle \int_B |\nabla  \overline{u}_q|^2 |x|^\alpha\, dx}
{\displaystyle \left[\int_B  \left| \overline{u}_q  - d 
 \right|^q \,  |x|^{\alpha}  \,   dx\right]^{2/q}} \, .
$$
By the triangle inequality and (\ref{normaLqoverlineu})
we get
\begin{eqnarray}
\label{lastlambda}
\lambda_{\alpha,q}(B)
&\leq &
\displaystyle \frac{1/2}{
\displaystyle 
\left[ 
\left( 
\int_B 
|\overline{u}_q  |^q \,  |x|^{\alpha }   \, dx    \right) ^{1/q} 
- d 
\left( 
\int_B |x|^{\alpha } \, dx 
\right) ^{1/q} 
\right] ^2   }
\\
\nonumber
&= &
\displaystyle \frac{1/2}{\displaystyle \left[\left[\frac{1}{2} \left[\lambda^{as}_{\alpha,q}(B)\right]^{-q/2}\right]^{ 1/q} - \displaystyle \left|\int_B \overline{u}_q  |x|^\alpha\, dx \right|\left[\int_B |x|^{\alpha}\, dx \right]^{(1/q)-1} \right]^{2}}\, ,
\\
\nonumber
&= &
\frac{(1/2)^{-(2/q)+1}}
{\left[1 - \displaystyle \left|\int_B \overline{u}_q  |x|^\alpha\, dx \right|\left[\displaystyle \int_B |x|^{\alpha} \, dx\right]^{(1/q)-1}\left[\lambda^{as}_{\alpha,q}(B)\right]^{1/2}\right]^{2}}\cdot \lambda^{as}_{\alpha,q}(B)\, .
\end{eqnarray}
By Lemma 2.3, (\ref{poincare2}), and Theorem 2.1  
there exists a positive constant $C$, independent of $q$,  such that
$$
\left|\int_B \overline{u}_q \ |x|^\alpha, dx\right|\leq C\,\,\,\,\,\,\,\,\forall\, q \geq 2 \,.
$$
Since $\lambda^{as}_{\alpha,q}(B)\to 0$ as $q\to \infty$ by Corollary \ref{secondlimit}, the denominator in the last line of (\ref{lastlambda}) tends to $1$, as $q\to \infty$. Therefore, for $q$ sufficiently large
$$
\lambda_{\alpha,q}(B)\leq \frac{2}{3} \cdot \lambda^{as}_{\alpha,q}(B)<\lambda^{as}_{\alpha,q}(B)\,.
$$
This
shows the breaking of anti-symmetry. $\hfill \Box $ 
\section{A weighted Szeg\H{o}-Weinberger inequality}
Throughout this section  we will denote
by $B_{R}$ the ball in $\mathbb{R}^{N}$, with $N\geq 2$, centered at the
origin with radius $R$ and we will assume that $\alpha \in (0,N)$.
Note that when $\alpha$ lies in this interval, 
Theorem \ref{functional_analysis} ensures that 
 $W^{1,2} (\Omega, |x|^\alpha , |x|^{\alpha })$ 
is compactly embedded in $L^{2}(\Omega
,\left\vert x\right\vert ^{\alpha }),$ 
  for any Lipschitz bounded domain 
$\Omega $ in $\mathbb{R}^{N}$, containing the origin.
Therefore $\mu_{1,\alpha}(\Omega)$, defined in \eqref{mu_1},
coincides with the first nonzero eigenvalue of the problem
\begin{equation}
	\left\{ 
	\begin{array}{ll}
		-\nabla \left( \left\vert x\right\vert ^{\alpha }\nabla u\right) 
		=\mu 
		\left\vert x\right\vert ^{\alpha }u & \text{in } \  \Omega  \\[0.1cm] 
		\dfrac{\partial u}{\partial \nu }=0 & \text{on } \  \partial \Omega ,
	\end{array}
	\right.   \label{P}
\end{equation}
where $\nu$ denotes the outer normal to $\partial \Omega$.
\\
We recall that, for any bounded domain $\Omega $ in $\mathbb{R}^{N}$, we denote   
 by  $\Omega ^{\sharp }$ the ball centered at the origin, whose  radius $r^{\sharp
}$ is such that
\begin{equation*}
\left\vert \Omega \right\vert _{\alpha }:=
\int_{\Omega }\left\vert x\right\vert ^{\alpha }dx=\int_{\Omega ^{\sharp
}}\left\vert x\right\vert ^{\alpha }dx=\frac{N\omega _{N}}{N+\alpha }\left(
r^{\sharp }\right) ^{N+\alpha }.
\end{equation*}
For the sequel we need a few notion and results from the theory of weighted rearrangements.

Let $u$ be a real measurable function defined in $\Omega$.
The decreasing rearrangement and the increasing rearrangement of $u(x)$, with
respect to the measure $\left\vert x\right\vert ^{\alpha }dx$, are defined
 by 
\begin{equation*}
u^{\ast }\left( s\right) =\inf \left\{ t\geq 0:\left\vert \left\{ x\in
\Omega :\left\vert u(x)\right\vert >t\right\} \right\vert _{\alpha }\leq
s\right\} \text{, \ }s\in \left( 0,\left\vert \Omega \right\vert _{\alpha }
\right] 
\end{equation*}
and
\begin{equation*}
u_{\ast }\left( s\right) =u^{\ast }\left( \left\vert \Omega \right\vert
_{\alpha }-s\right) \text{, \ }s\in \left[ 0,\left\vert \Omega \right\vert
_{\alpha }\right) 
\end{equation*}
respectively.

\noindent Moreover, the $N$-dimensional radially decreasing and radially increasing
rearrangements of $u(x)$, with respect to the measure $\left\vert
x\right\vert ^{\alpha }dx$, are defined as follows
\begin{equation*}
u^{\sharp }(x)=u^{\sharp }(\left\vert x\right\vert )=u^{\ast }\left( \frac{
N\omega _{N}}{N+\alpha }\left\vert x\right\vert ^{N+\alpha }\right) \text{ \
for }0<\left\vert x\right\vert \leq r^{\sharp }
\end{equation*}
and
\begin{equation*}
u_{\sharp }(x)
=
u_{\sharp }(\left\vert x\right\vert )
=
u_{\ast }\left( \frac{
N\omega _{N}}{N+\alpha }
\left\vert x\right\vert ^{N+\alpha }\right)
 \text{ \
for }
0 \leq \left\vert x\right\vert < r^{\sharp },
\end{equation*}
respectively.
\noindent
Clearly, for any nonnegative function $h : \Omega^{\sharp} \to \mathbb{R}$ that is radial and radially non-increasing (respectively, non-decreasing), it holds that
\[
h(x) = h^{\sharp}(x) \quad \text{(respectively, } h(x) = h_{\sharp}(x)\text{)}.
\]

The Hardy-Littlewood inequality   
states that for every pair of
measurable functions $u$ and $v$ defined on $\Omega $, it holds that

\begin{gather}
\label{HL}
\int_{\Omega ^{\sharp }}u^{\sharp }(x)v_{\sharp }(x)
\left\vert
x\right\vert ^{\alpha }dx
=
\int_{0}^{\left\vert \Omega \right\vert _{\alpha
}}u^{\ast }\left( s\right) v_{\ast }\left( s\right) ds
\leq
 \int_{\Omega
}\left\vert u\left( x\right) v\left( x\right) \right\vert 
\left\vert
x\right\vert ^{\alpha }dx\leq    \\
\text{   \ \ \ \ \ \ \ \ \ \ \ \ \ \ \ \ \ \ \ \ \ \ \ \ \ \ \ \ }
\int_{0}^{\left\vert \Omega \right\vert _{\alpha }}u^{\ast }\left(
s\right) v^{\ast }\left( s\right) ds
=
\int_{\Omega ^{\sharp }}u^{\sharp
}(x)v^{\sharp }(x)
\left\vert x\right\vert ^{\alpha }dx.  \notag
\end{gather}

Note that the inequalities (\ref{HL}) can be found for the uniform Lebesgue measure in \cite{Kes}, pp. 11--16. But it is well-known that they carry over to the weighted case, see e.g. \cite{BS}, p. 44.

As anticipated in the introduction, this section is devoted to the proof of the 
Szeg\H{o}-Weinberger-type inequality for $\mu_{1,\alpha}(\Omega)$ 
contained in Theorem \ref{TSW}.

\bigskip

The first step in proving the above-mentioned result is to show that 
$\mu_{1,\alpha}(B_{R})$ 
is an $N$-fold degenerate eigenvalue and a corresponding
set of eigenfunctions is in the form
\begin{equation*}
G(\left\vert x\right\vert )\frac{x_{i}}{\left\vert x\right\vert }\text{ for }
i=1,...,N,
\end{equation*}
for some suitable function $G$.
To this aim it is convenient to rewrite problem $(\ref{P})$, when $\Omega
=B_{R},$ in polar coordinates as follows
\begin{equation}
\left\{ 
\begin{array}{ccc}
-\dfrac{1}{r^{N-1}}\dfrac{\partial }{\partial r}\left( r^{N-1}\dfrac{
\partial u}{\partial r}\right) -\dfrac{1}{r^{2}}\Delta _{\mathbb{S}
^{N-1}}(\left. u\right\vert \mathbb{S}_{r}^{N-1})-\dfrac{\alpha }{r}\dfrac{
\partial u}{\partial r}=\mu_{1,\alpha}(B_{R}) \, u & \text{in} & B_{R} \\ 
&  &  \\ 
\dfrac{\partial u}{\partial r}=0 & \text{on} & \partial B_{R},
\end{array}
\right.   \label{P_R}
\end{equation}
where $\mathbb{S}_{r}^{N-1}=\partial B_{r}$, $\left. u\right\vert \mathbb{S}
_{r}^{N-1}$ is the restriction of $u$ on $\mathbb{S}_{r}^{N-1}$ and,
finally, $\Delta _{\mathbb{S}^{N-1}}(\left. u\right\vert \mathbb{S}
_{r}^{N-1})$ is the standard Laplace-Beltrami operator relative to the
manifold $\mathbb{S}_{r}^{N-1}.$
\\
It is well known that the solutions of the eigenvalue problem (\ref{P_R}) can be found via separation of variables. Writing 
$u(x)=Y(\theta )f(r)$ and plugging it into the equation in (\ref{P_R}), with $\theta
\in \mathbb{S}_{1}^{N-1}$, we get
\begin{equation*}
-\dfrac{Y}{r^{N-1}}\left( r^{N-1}f^{\prime }\right) ^{\prime }-\dfrac{f}{
r^{2}}\Delta _{\mathbb{S}^{N-1}}(Y)-\dfrac{\alpha }{r}Yf^{\prime }=\mu_{1,\alpha}(B_{R}) Yf
\end{equation*}
and in turn
\begin{equation*}
\dfrac{1}{fr^{N-3}}\left( r^{N-1}f^{\prime }\right) ^{\prime }
+
\alpha r\dfrac{f^{\prime }}{f}
+
\mu_{1,\alpha}(B_{R}) r^{2}
=-\dfrac{\Delta _{\mathbb{S}^{N-1}}(Y)}{Y
}=\overline{k}
\end{equation*}
Since the last equality is fulfilled if and only if 
\begin{equation*}
\overline{k}=k(k+N-2)\text{ \ with }k\in \mathbb{N}_{0}:=\mathbb{N}\cup
\left\{ 0\right\} 
\end{equation*}
(see e.g. \cite{Mu}), we have that
\begin{equation}\label{eqf}
f^{\prime \prime }+\frac{N-1+\alpha }{r}f^{\prime }+\mu_{1,\alpha}(B_{R}) f-\frac{k(k+N-2)}{
r^{2}}f=0\text{ \ with }k\in \mathbb{N}_{0}.
\end{equation}
 Hence  the eigenfunctions $\mu _i $ of problem (\ref{P_R}) are either purely radial
\begin{equation}
u_{i}(r)=f_{0}(\mu _{i};r),\text{ \ if }k=0,  \label{f0}
\end{equation}
or in the form
\begin{equation*}
u_{i}(r,\theta )
=
f_{k}(\mu _{i};r)Y(\theta ),\text{ \ if }k\in \mathbb{N}.
\end{equation*}
Denote $\mu:= \mu_{1,\alpha}(B_{R})$. Let us explicitely remark that equation \eqref{eqf} can be rewritten as
\begin{equation}\label{eqf1}
f^{\prime \prime }+\frac{\beta+1}{r}f^{\prime }+\left[\mu -\frac{k(k+N-2)}{
r^{2}}\right]f=0\,,
\end{equation}
with  $\beta=N-2+\alpha$ and $k\in \mathbb{N}_{0}$. 
As in Section 5 we deduce that solutions 
to equation \eqref{eqf} are given by
$$
f_k(r)=r^{-\frac\beta2}
\left( c_{1}J_{\nu_k}(\sqrt{\mu }r)+c_{2}Y_{\nu_k}(\sqrt{\mu }r)\right)
$$
where $c_1$, $c_2$ are arbitrary constants and 
$$
\nu_k=\sqrt{\frac{\beta^{2}}{4}+k(k+N-2)}=
\sqrt{\frac{(N-2+\alpha
)^{2}}{4}+k(k+N-2)}\,.
$$
Moreover the solutions $f_k$ belonging to 
 $W^{1,2}(B_R, |x|^\alpha,   |x|^\alpha)$ are obtained by choosing $c_2=0$, i.e.
 $$
f_k(r)= c_{1}r^{-\frac\beta2}
J_{\nu_k}(\sqrt{\mu }r)\,, \qquad 0<r<R\,.
$$
In the sequel we will denote by $\tau _{n}(R),$ with $n\in \mathbb{N}_{0},$
the sequence of eigenvalues of (\ref{P_R}) whose corresponding
eigenfunctions are purely radial, i.e. in the form (\ref{f0}). Clearly in
this case the first eigenfunction is constant and the corresponding
eigenvalue $\tau _{0}(R)=0.$ We will denote by $\upsilon _{n}(R),$ with $
n\in \mathbb{N},$ the remaining eigenvalues of (\ref{P_R}). We finally
arrange the eigenvalues in such a way that the sequences $\tau _{n}(R)$ and $
\upsilon _{n}(R)$ are increasing.
\\
Our weighted  Szeg\H{o}-Weinberger-type inequality relies on the following
\begin{lemma}\label{For_SW} 
The following inequality holds for every $R>0$:	
\begin{equation*}
\upsilon _{1}(R)<\tau _{1}(R).
\end{equation*}
\end{lemma}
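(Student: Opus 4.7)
The plan is to exploit the explicit Bessel representation of the eigenfunctions of \eqref{P_R} already derived in this section, and to reduce the strict inequality $\upsilon_1(R) < \tau_1(R)$ to a comparison of two Bessel function zeros. By what precedes, every eigenpair $(\mu, f_k)$ with angular index $k$ corresponds to $f_k(r) = c_1\, r^{-\beta/2} J_{\nu_k}(\sqrt{\mu}\,r)$ with $\beta = N-2+\alpha$ and $\nu_k = \sqrt{\beta^2/4 + k(k+N-2)}$, and the Neumann condition $f_k'(R) = 0$ reads, exactly as in \eqref{bdN}--\eqref{eq2bis},
\[
F_{\nu_k}\bigl(\sqrt{\mu}\,R\bigr) = \frac{\beta}{2}, \qquad F_\nu(x) := x\, J_\nu'(x)/J_\nu(x).
\]

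First I would identify $\sqrt{\tau_1(R)}\,R$. For $k=0$ one has $\nu_0 = \beta/2$, so the boundary condition becomes $F_{\nu_0}(x) = \nu_0$, which by the identity \eqref{fnu} is equivalent to $x\, J_{\beta/2+1}(x) = 0$. The root $x=0$ corresponds to the constant mode $\tau_0(R) = 0$, so the first positive root yields
\[
\sqrt{\tau_1(R)}\,R = j_{\beta/2+1,1}.
\]
Next, for the non-radial modes $k \geq 1$ one has $\nu_k > \beta/2$; by the monotonicity of $F_{\nu_k}$ recalled from \cite{Landau} in Section~5.1 (it decreases from $\nu_k$ at $x=0$ to $-\infty$ at $x = j_{\nu_k,1}$), the equation $F_{\nu_k}(x) = \beta/2$ admits a unique root $x_{\nu_k,1} \in (0, j_{\nu_k,1})$. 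Since $x_{\nu,1}$ is increasing in $\nu$ (by $\tfrac{d}{d\nu} x_{\nu,1} > 0$, also recalled in Section~5.1) and $k \mapsto \nu_k$ is strictly increasing, the minimum is attained at $k=1$, so
\[
\sqrt{\upsilon_1(R)}\,R = x_{\nu_1,1} < j_{\nu_1,1}, \qquad \nu_1 = \sqrt{\beta^2/4 + (N-1)}.
\]

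The proof then closes by comparing $\nu_1$ and $\beta/2+1$: a direct computation yields
\[
(\beta/2+1)^2 - \nu_1^2 = \beta + 1 - (N-1) = \alpha,
\]
so $\nu_1 < \beta/2 + 1$ precisely because $\alpha > 0$; the standard monotonicity \eqref{zeros} of $j_{\nu,1}$ in $\nu$ then gives $j_{\nu_1,1} < j_{\beta/2+1,1}$. Chaining,
\[
\sqrt{\upsilon_1(R)}\,R = x_{\nu_1,1} < j_{\nu_1,1} < j_{\beta/2+1,1} = \sqrt{\tau_1(R)}\,R,
\]
which is the desired strict inequality.

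The step I expect to require the most care is the identification of $\tau_1(R)$: one must notice that the boundary condition $F_{\nu_0}(x) = \nu_0$ is degenerate at $x=0$, where it picks up the constant eigenfunction with eigenvalue $0$, so that $\tau_1(R)$ is governed by the \emph{first positive} zero of $J_{\beta/2+1}$ rather than by $x_{\nu_0,1}$. Once this is in place, the proof reduces to the single arithmetic observation $\nu_1 < \beta/2+1 \Leftrightarrow \alpha > 0$, combined with the two standard monotonicity properties of Bessel function zeros.
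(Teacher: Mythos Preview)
Your proof is correct and follows essentially the same route as the paper's: both identify $\sqrt{\tau_1(R)}\,R = j_{\beta/2+1,1}$ via the recurrence \eqref{fnu}, identify $\sqrt{\upsilon_1(R)}\,R = x_{\nu_1,1} < j_{\nu_1,1}$, and close with the arithmetic $\nu_1 < \beta/2+1 \Leftrightarrow \alpha > 0$ combined with the monotonicity of $j_{\nu,1}$. Your version is slightly more explicit in two places---you justify that the minimum over $k\ge 1$ occurs at $k=1$ via the monotonicity of $x_{\nu,1}$ in $\nu$, and you compute $(\beta/2+1)^2 - \nu_1^2 = \alpha$ directly---but the argument is the same.
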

\begin{proof}
We recall that $\tau _{1}:=\tau _{1}(R)$ is the first nonzero
eigenvalue of
\begin{equation}
\left\{ 
\begin{array}{ccc}
g^{\prime \prime }+\dfrac{N-1+\alpha }{r}g^{\prime }+\tau g=0 & \text{in} & 
(0,R) \\ 
&  &  \\ 
g^{\prime }(0)=g^{\prime }(R)=0 &  & 
\end{array}
\right. \,.  \label{g}
\end{equation}
Equation in \eqref{g} coincides with equation \eqref{eqf} by chosing $k=0$ and $\mu=\tau$.
Therefore the solutions to equation in  \eqref{g} are given by
$$
g(r)=f_0(r)= c_{1}r^{-\frac\beta2}
J_{\nu_0}(\sqrt{\tau_1 }r)
$$
with 
$\nu_0=\frac{\beta}{2}
=
\frac{N-2+\alpha}{2}$ 
and moreover, as in Section 5,  Neumann condition $g'(R)=0$ is equivalent to
\begin{equation}\label{neug}
-\frac\beta 2 J_{\nu_0}(\sqrt{\tau_1 }R) + \sqrt{\tau_1 }R J'_{\nu_0}(\sqrt{\tau_1 }R) =0\,.
\end{equation}
Furthemore we recall that  $\upsilon _{1}:=\upsilon_{1}(R)$ is the first eigenvalue of
\begin{equation}
\left\{ 
\begin{array}{ccc}
w^{\prime \prime }+\dfrac{N-1+\alpha }{r}w^{\prime }+\upsilon w-\dfrac{N-1}{
r^{2}}w=0 & \text{in} & (0,R) \\ 
&  &  \\ 
w(0)=w^{\prime }(R)=0. &  & 
\end{array}
\right.   \label{w}
\end{equation}
Equation in \eqref{w} coincides with equation \eqref{eqf} by chosing $k=1$ and $\mu=\upsilon$.
Therefore the solutions to equation in \eqref{w} are given by
$$
w(r)=f_1(r)= c_{1}r^{-\frac\beta2}
J_{\nu_1}(\sqrt{\upsilon_1 }r)
$$
with $\nu_1=\sqrt{N-1+\frac{\beta ^2}{4}
}=\sqrt{N-1+\frac{(N-2+\alpha) ^2}{4}
}
$ and  moreover, as in Section 5,   Neumann condition $w'(R)=0$ is equivalent to
\begin{equation}\label{neuw}
-\frac\beta 2 J_{\nu_1}(\sqrt{\upsilon_1 }R) + \sqrt{\upsilon_1 }R J'_{\nu_1}(\sqrt{\upsilon_1 }R) =0\,.
\end{equation}
By \eqref{neug} and \eqref{neuw} we deduce that $\sqrt{\tau_1 }R$ and  $\sqrt{\upsilon_1 }R$ are the smallest positive solution to the equations 
\begin{equation}\label{neug1}
-\frac\beta 2 J_{\nu_0}(x) + x J'_{\nu_0}(x) =0
\end{equation}
and 
\begin{equation}\label{neuw1}
-\frac\beta 2 J_{\nu_1}(x) + x J'_{\nu_1}(x) =0\,,
\end{equation}
respectively. Therefore, arguing as in Section 5,  since $\beta>0$, the positive root $x_{\nu_0, 1}$ of equation \eqref{neug1} coincides with the zero $j_{\nu_{\Io+1},1}$ of  the Bessel function $J_{\nu_\Io+1}$ and the positive root of \eqref{neuw1} coincides with $x_{\nu_1,1}$.
By properties of $x_{\nu, k}$, described in Section 5, we know that
\begin{equation}\label{1}
x_{\nu_1, 1}<j_{\nu_1,1}\,.
\end{equation}
Moreover by properties of zero's Bessel functions \eqref{zeros}, since for  $N\ge 2$ and $\alpha>0$, $\nu_1=\sqrt{N-1+\frac {\beta^2}4}<\nu_0+1
=
\frac{\beta}2+1$,  it results
\begin{equation}\label{2}
j_{\nu_1,1}<j_{{\nu_0+1},1}\equiv x_{\nu_\Io, 1}\,.
\end{equation}
Combining \eqref{1} and \eqref{2}, we get 
$$
x_{\nu_1,1}\equiv \sqrt{\upsilon_1 }R
< j_{\nu_1,1}< j_{\nu_0+1,1}\equiv x_{\nu_0,1} \equiv \sqrt{\tau_1 }R
\,.
$$
This yields the conclusion.
\end{proof}
We can now prove Theorem \ref{TSW}.

\vspace{.2 cm}

\noindent {\sl Proof of Theorem \ref{TSW}.} 
Lemma \ref{For_SW} ensures that 
$\mu_{1,\alpha}(\Omega ^{\sharp })$
 is a $N$-fold degenerate eigenvalue and a corresponding
set of eigenfunctions is
\begin{equation*}
w_{1}(\left\vert x\right\vert)\frac{x_{i}}{\left\vert x\right\vert},
\text{ for }i=1,...,N,
\end{equation*}
where $w_{1}$ is the first eigenfunction of problem (\ref{w}). As it is easy
to verify,  we have
\begin{equation}
\mu_{1,\alpha}(\Omega ^{\sharp })
=
\frac{\dint_{0}^{r^{\sharp }}\left[ \left( \dfrac{
d}{dr}w_{1}\right) ^{2}
+
\dfrac{N-1}{r^{2}}w_{1}^{2}\right] r^{\alpha +N-1}dr}{
\dint_{0}^{r^{\sharp }}w_{1}^{2}r^{\alpha +N-1}dr}. 
 \label{2_lambda}
\end{equation}
By the  assumptions on the symmetry of the set $\Omega $, it holds that
\begin{equation*}
\int_{\Omega }
w_{1}(\left\vert x\right\vert )
\frac{x_{i}}{\left\vert
x\right\vert } \left\vert x \right\vert^{\alpha}
dx = 0\text{ }\,\,\,  \forall i\in \left\{ 1,...,N\right\} .
\end{equation*}
Therefore we can use 
\begin{equation*}
w_{1}(\left\vert x\right\vert)
\frac{x_{i}}{\left\vert x\right\vert}, \,\,\,  \forall i=1,...,N,
\end{equation*}
as test functions for $\mu_{1,\alpha}(\Omega)$, obtaining 
\begin{equation}
\mu_{1,\alpha}(\Omega)
\leq 
\frac{\dint_{\Omega }\left[ 
\left(  G^{\prime }(\left\vert x\right\vert) \right) ^{2}\frac{x_{i}^{2}}{\left\vert x\right\vert^{2}}
+
\frac{G^{2}(\left\vert x\right\vert)}{\left\vert x\right\vert^{2}}
\left( 1-\frac{x_{i}^{2}}{
\left\vert x\right\vert^{2}}\right) \right] \left\vert x\right\vert ^{\alpha }dx}{\dint_{\Omega }
\left[ G^{2}(\left\vert x\right\vert)\frac{x_{i}^{2}}{\left\vert x\right\vert^{2}}\right] \left\vert x\right\vert
^{\alpha }dx}\text{ \ for \ }i=1,...,N\text{.}  \label{i}
\end{equation}
where
\begin{equation}
G(r):=\left\{ 
\begin{array}{ccc}
w_{1}(r) & \text{if} & r\leq r^{\sharp } \\ 
&  &  \\ 
w_{1}(r^{\sharp }) & \text{if} & r>r^{\sharp }.
\end{array}
\right.   \label{G}
\end{equation}
Summing over the index $i$ inequalities (\ref{i}), we get
\begin{equation*}
\mu_{1,\alpha}(\Omega))
\leq
 \frac{\dint_{\Omega }\left[
 \left(  G^{\prime } (\left\vert x\right\vert) \right) ^{2}
+
\dfrac{N-1}{\left\vert x\right\vert^{2}}G^{2}(\left\vert x\right\vert)\right]
 \left\vert x\right\vert^{\alpha }dx}
{\dint_{\Omega }G^{2}(\left\vert x\right\vert)\left\vert x\right\vert ^{\alpha }dx}
\text{.}
\end{equation*}

Note that, since $w_{1}^{\prime }(r)>0$ in $\left( 0,R\right) ,$ we have
that $G^{2}(r)$ is a non-decreasing function for $r\geq 0$. 
Hardy-Littlewood inequality (\ref{HL}), with $u=G^{2}$ 
and $v\equiv 1,$ yields
\begin{equation}
\int_{\Omega }G^{2}(\left\vert x\right\vert )
\left\vert x\right\vert
^{\alpha }dx
\geq
 \int_{\Omega ^{\sharp }}\left[ G^{2}(\left\vert
x\right\vert )\right] _{\sharp }
\left\vert x\right\vert ^{\alpha
}dx=\int_{\Omega ^{\sharp }}G^{2}(\left\vert x\right\vert )
\left\vert x\right\vert ^{\alpha }dx,  \label{G>}
\end{equation}
where the equality in (\ref{G>}) holds true thanks to the monotonicity of
the function $G^{2}(r).$

Now let us set 
\begin{equation}
N(r):=\left( \dfrac{d}{dr}G(r)\right) ^{2}+\dfrac{N-1}{r^{2}}G^{2}(r).
\label{N}
\end{equation}
Now we claim that the function $N(r)$ is strictly decreasing in $(0, + \infty)$.
Indeed we have
\begin{equation*}
\frac{d}{dr}N(r)
=
2G^{\prime }G^{\prime \prime }
+
\frac{2(N-1)}{r^{2}}GG^{\prime }
-
\frac{2(N-1)}{r^{3}}G^{2}.
\end{equation*}
Since $G^{\prime }(r)=0$ for any $r>r^{\sharp },$ we have
\begin{equation*}
\frac{d}{dr}N(r)
=-\frac{2(N-1)}{r^{3}}w_{1}^{2}(r^{\sharp })<0
\text{ \ for any } r > r^{\sharp } .
\end{equation*}
While for any $r\in (0,r^{\sharp })$ it holds
\begin{equation*}
\frac{d}{dr}N(r)
=
\frac{d}{dr}\left[ \left( \dfrac{d}{dr}w_{1}\right) ^{2}
+
\dfrac{(N-1)w_{1}^{2}}{r^{2}}\right] =2w_{1}^{\prime }w_{1}^{\prime \prime }
+
\frac{2(N-1)w_{1}w_{1}^{\prime }}{r^{2}}
-
\frac{2(N-1)}{r^{3}}w_{1}^{2}.
\end{equation*}
Using the equation for $w_{1}$ we get
\begin{equation*}
\frac{d}{dr}N(r) =
2w_{1}^{\prime }\left( -\dfrac{N-1+\alpha }{r}w_{1}^{\prime
}-
\mu_{1,\alpha}(\Omega^{\sharp }) w_{1}+\dfrac{N-1}{r^{2}}w_{1}\right) +2\frac{N-1
}{r^{2}}w_{1}w_{1}^{\prime }-\frac{2}{r^{3}}(N-1)w_{1}^{2}
\end{equation*}
\begin{equation*}
=-2\mu_{1,\alpha}(\Omega^{\sharp })
w_{1}^{\prime }w_{1}-2\dfrac{\alpha }{r}\left(
w_{1}^{\prime }\right) ^{2}-2\frac{N-1}{r}
\left[ 
 \left( w_{1}^{\prime
}\right) ^{2}-\dfrac{2}{r}w_{1}w_{1}^{\prime }+\frac{1}{r^{2}}
w_{1}^{2}
\right]
\end{equation*}
\begin{equation*}
=-2 \mu_{1,\alpha}(\Omega^{\sharp })
w_{1}^{\prime }w_{1}-2\dfrac{\alpha }{r}\left(
w_{1}^{\prime }\right) ^{2}
-2\frac{(N-1)}{r}
\left(
 w_{1}^{\prime }-\frac{
w_{1}}{r}
\right) ^{2}<0
\end{equation*}
Therefore 
\begin{equation*}
\frac{d}{dr}N(r)<0\text{ \ for any }r\in \left( 0,r^{\sharp }\right),
\end{equation*}
since we are assuming that $\alpha \in (0,N)$ and we know, by Lemma 7.1, that $w_{1}^{\prime }w_{1}\geq 0$ in $\left(
0,r^{\sharp }\right) .$
\\
By repeating the same arguments used for (\ref{G>}),  using the monotonicity of 
the function $N$, just proved, we get
\begin{equation}
\int_{\Omega }\left[ 
\left( G^{\prime}(\left\vert x\right\vert)\right) ^{2}
+
\dfrac{N-1}{\left\vert x\right\vert^{2}}
G^{2}(\left\vert x\right\vert)
\right] \left\vert x\right\vert ^{\alpha }dx
\leq
 \int_{\Omega ^{\sharp
}}\left[ 
\left( G^{\prime}(\left\vert x\right\vert)\right) ^{2}
+
\dfrac{N-1}{\left\vert x\right\vert^{2}}
G^{2}(\left\vert x\right\vert)
\right]
\left\vert x\right\vert ^{\alpha }dx.  
\label{N<}
\end{equation}
Inequalities (\ref{N<}) and (\ref{G>}), 
taking into account equality (\ref{2_lambda}), yield (\ref{SW}).

\noindent Finally, the proof easily shows that if 
$\mu_{1,\alpha}(\Omega ^{\sharp } )=\mu_{1,\alpha}(\Omega )$
then $\Omega \equiv \Omega^\sharp$. 
$\hfill \Box $

\vspace{.2 cm}

\begin{remark} 
Some numerics would suggest that if one drops
the assumption on the sign of $\alpha $, then the function $N(r)$, in general, is no
longer decreasing.
\end{remark}

We now briefly discuss the relation between our Theorem and Weinberger's one. 
First of all, we stress that our estimate relies heavily on the weighted embedding theorems presented in Section 2, and in particular on Theorem 2.1. Without these, the minimum in \eqref{mu_1} might not be attained.

Let $B_{R}$ be the ball in $\mathbb{R}^{N}$ centered at the origin with
radius $R$. The starting point of Weinberger's approach consists in
observing that $\mu _{1}(B_{R})$ is an $N$-degenerate eigenvalue, and a
basis for the corresponding eigenspace has the form:
\begin{equation*}
f(\left\vert x\right\vert )\frac{x_{i}}{\left\vert x\right\vert },\text{
with }i\in \left\{ 1,...,N\right\} .
\end{equation*}
The function $f$, which, as is well known, can be expressed in terms of
Bessel functions, satisfies the condition $f^{\prime }(r)=0=f(0)$.
In the case addressed in the present paper, 
we had to prove that the same phenomenon occurs for
$\mu _{1}(B_{R};\left\vert x\right\vert ^{\alpha})$.
Weinberger then proceeds by using the following $N$ smooth test functions for 
$\mu _{1}(\Omega )$:
\begin{equation*}
P_{i}(x)=F(\left\vert x\right\vert )\frac{x_{i}}{\left\vert x\right\vert },
\text{ with }i\in \left\{ 1,...,N\right\} ,
\end{equation*}
where
\begin{equation*}
F(t)=\left\{ 
\begin{array}{c}
f(t)\text{ for }t\in \left[ 0,r\right] \text{ \ \ \ } \\ 
\\ 
f(r)\text{ for }t\in \left( r,+\infty \right) .
\end{array}
\right.
\end{equation*}
Note that he is allowed to do so, since it is always possible to choose the
origin such that the following $N$ orthogonality conditions are simultaneously fulfilled:
\begin{equation}
 \label{N_oth}
P_{i}(x)\perp 1\text{ for any }i\in \left\{ 1,...,N\right\} .
\end{equation}
Since $\mu_{1, \alpha}(\Omega)$ changes when the origin is shifted, we add the hypothesis of
the symmetry of  $\Omega$ to ensure that the orthogonality conditions \eqref{N_oth} remain satisfied.
In fact, this is the only point where such an assumption is needed. 

Weinberger finally uses the previous considerations to express $\mu_1(\Omega)$ as the ratio of integrals of radial functions. These functions exhibit the appropriate monotonicity to ultimately obtain the estimate. 
Finally, we had to prove that similar circumstances also arise in our case.
 
 \bigskip

\noindent {\bf Acknowledgements:}
The first author wants to thank the University of Naples Federico II for a visiting appointment and the kind hospitality. The second author thanks the University of Leipzig for kind hospitality.  Part of this work was done when the third author was hosted at ``Institut de Mathématiques de Jussieu-Paris Rive Gauche, projet Combinatoire et Optimisation"; the author thanks this institution for the warm hospitality. The fourth author thanks Universit\' e de Le Havre and University of Rostock for their warm hospitality. 

\noindent F.Chiacchio and A.Mercaldo are members of GNAMPA of INdAM.

\noindent The authors thank the anonymous reviewers for their helpful and constructive comments and Nikita Simonov for suggesting some useful references.

\bigskip

\noindent {\bf Funding information:}
The research of F. Brock was supported by the University of Rostock. The research of F. Chiacchio  was partially supported  by the projects: PRIN 2017JPCAPN (Italy) Grant: Qualitative and quantitative aspects of nonlinear PDEs; PRIN PNRR 2022 - P2022YFAJH - Linear and Nonlinear PDE's: New directions and Applications. This work is partially supported by the ANR projects SHAPO and  STOIQUES financed by the French Agence Nationale de la Recherche (ANR).  The research of A.Mercaldo was partially supported by  Italian MIUR through research projects PRIN 2017 Direct and inverse problems for partial differential equations: theoretical aspects and applications,  PRIN 2022: PRIN20229M52AS Partial differential equations and related geometric-functional inequalities, PRIN PNRR 2022 - P2022YFAJH - Linear and Nonlinear PDE's: New directions and Applications. 

 \bigskip

\noindent {\bf Author contributions: } All authors have  contributed  equally to the  manuscript.
All authors have accepted responsibility for the entire content of this manuscript and consented to its submission to the journal, reviewed all the results, and approved the final version of the manuscript.

 \bigskip

\noindent {\bf Conflict of interest: } The authors state no conflict of interest.

 \bigskip

\end{document}